\newcommand{\bE}{\mathbb{E}}
\newcommand{\cP}{\mathcal{P}}
\newcommand{\eps}{\epsilon}
\newcommand{\cX}{\mathcal{X}}
\newcommand{\cU}{\mathcal{U}}
\newcommand{\bP}{\mathbb{P}}
\newcommand{\cG}{\mathcal{G}}
\newcommand{\cR}{\mathcal{R}}
\newcommand{\te}{\theta}
\newcommand{\cK}{\mathcal{K}}
\newcommand{\cC}{\mathcal{C}}
\newcommand{\cE}{\mathcal{E}}
\newcommand{\ust}{^{\star}}
\newcommand{\nal}[1]{\begin{align*}#1\end{align*}}
\newcommand{\ali}[1]{\begin{align}#1\end{align}}
\newtheorem{assumption}{\textbf{Assumption}}
\title[]{Reward Biased Maximum Likelihood Estimation for Learning in Constrained MDPs}
\author{%
	\Name{Rahul Singh} \Email{rahulsingh@iisc.ac.in}\\
	\addr Indian Institute of Science, Bangalore, India.
}
\begin{document}
	
	\maketitle

	\begin{keywords}%
		Reinforcement Learning; Markov Decision Process; Adaptive Control%
	\end{keywords}
\begin{abstract}
	We use the Reward Biased Maximum Likelihood Estimation (RBMLE) algorithm~\cite{techreport} to learn optimal policies for constrained Markov Decision Processes (CMDPs). We analyze the learning regrets of RBMLE.
\end{abstract}	
\section{Introduction} \label{Introduction}

Consider a controlled Markov chain with finite state space $X$, finite action set $U$, and controlled transition
probabilities $\bP(x(t+1)=y~|x(t)=x, u(t)=u) = p(x,y,u)$, where $x(t) \in X$ denotes the state at time $t$, and $u(t) \in U$ denotes the action
taken at time $t$. A reward $r(x,u)$ is received when action $u$ is taken in state $x$.

Let $\bar{r}(\pi,p)$ and $\bar{c}(\pi,p)$ denote the average reward and cost under a stationary policy $\pi$ when the true value of MDP parameter is $p$,
$$
\bar{r}(\pi,p) := \liminf_{T\to\infty} \frac{1}{T}\sum_{t=1}^{T} \bE_{\pi} r(x(t),u(t)).
$$
Let $R\ust(p)$ denote the maximal long-term average reward of the solution to the following CMDP when the parameter $p$ is known:
\nal{
\max_{\pi}\liminf_{T\to\infty} \frac{1}{T}\sum_{t=1}^{T} \bE_{\pi} r(x(t),u(t))\\
\mbox{ s.t. } \limsup_{T\to\infty} \frac{1}{T}\sum_{t=1}^{T} \bE_{\pi} c(x(t),u(t)) \le c_{ub}.
}
We consider the case where the transition probabilities $p$ are only known to belong a set $\Theta$, but otherwise unknown. 

A stationary randomized policy $\pi$ maps each state $x$ to a probability distribution over the space of actions $U$, i.e., $\pi(x): X \mapsto \Delta(U)$. 

\begin{lemma}
Let $\pi\ust$ be an optimal stationary randomized policy for the following CMDP
\ali{
	\max_{\pi} \bar{r}(\pi)  \mbox{ s.t. } \bar{c}(\pi) \le c_{ub}.\label{cmdp}
}
Let $\hat{\pi}$ be the policy from $\Pi_{F}$ that is ``closest'' to $\pi\ust$ (each $\pi\ust(x)\in \Delta(U)$ is mapped to the closest point in the tesselation). Then we have that
\ali{
\bar{r}(\hat{\pi}) \ge r\ust -  \epsilon \kappa_p |X|^2, \mbox{ and  }  \bar{c}(\hat{\pi}) \le c_{ub} +  \epsilon \kappa_p |X|^2.\label{ineq:lemma1_1}
}
Furthermore, the ``discretization error" 
$$
r\ust - \max_{ \pi \in \Pi_{F}: \bar{c}(\pi) \le c_{ub}    } \bar{r}(\pi),
$$
is $O(\epsilon)$.
\end{lemma}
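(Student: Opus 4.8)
The plan is to convert the near-optimal but possibly infeasible policy $\hat\pi$ produced in the first part of the lemma into a genuinely feasible member of $\Pi_{F}$, sacrificing only $O(\eps)$ in reward. The obstruction is purely the constraint slack $\bar{c}(\hat\pi) \le c_{ub} + \eps\kappa_p|X|^2$: I would remove it by first tightening the cost budget, solving the tightened CMDP exactly, and only then discretizing. Set $\eta := \eps\kappa_p|X|^2$ and let $V(b) := \max_{\pi:\,\bar{c}(\pi)\le b}\bar{r}(\pi)$ be the optimal average reward of the CMDP with budget $b$, so that $V(c_{ub}) = r\ust$.

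The first step is to argue that $V$ is Lipschitz from the left at $c_{ub}$, i.e. $V(c_{ub}-\eta) \ge r\ust - L\eta$ for a constant $L$ independent of $\eps$. Assuming a Slater point (a policy whose cost is strictly below $c_{ub}$), $V$ is concave and nondecreasing in $b$ with finite one-sided slope $L$ equal to the optimal Lagrange multiplier, which is the standard sensitivity bound for the occupation-measure LP. The second step is to let $\pi\ust_\eta$ be optimal for the tightened problem with budget $c_{ub}-\eta$, so that $\bar{c}(\pi\ust_\eta)\le c_{ub}-\eta$ and $\bar{r}(\pi\ust_\eta)\ge r\ust - L\eta$. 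The third step applies the first part of the lemma verbatim to $\pi\ust_\eta$ in place of $\pi\ust$: its nearest tessellation policy $\hat\pi_\eta\in\Pi_{F}$ satisfies $\bar{c}(\hat\pi_\eta)\le \bar{c}(\pi\ust_\eta)+\eta \le c_{ub}$ and $\bar{r}(\hat\pi_\eta)\ge \bar{r}(\pi\ust_\eta)-\eta \ge r\ust - (L+1)\eta$. Thus $\hat\pi_\eta$ is feasible and witnesses $\max_{\pi\in\Pi_{F}:\,\bar{c}(\pi)\le c_{ub}}\bar{r}(\pi)\ge r\ust-(L+1)\eta$, so the discretization error is at most $(L+1)\kappa_p|X|^2\,\eps = O(\eps)$.

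The main obstacle is the sensitivity claim of the first step: it is precisely what converts the $O(\eps)$ constraint slack into an $O(\eps)$ reward loss, and it fails without a strict-feasibility assumption, since near a degenerate constraint the value can drop discontinuously as the budget is tightened. I would establish it through the occupation-measure LP, where $\bar{r}$ and $\bar{c}$ are linear in the stationary state-action frequencies and the budget enters only the right-hand side of one inequality, so standard LP perturbation bounds give a slope controlled by the dual optimum, with the Slater point guaranteeing this multiplier is finite. A secondary point requiring care is that the two invocations of the discretization estimate use the same $\kappa_p$; this is legitimate because $\kappa_p$ depends only on the mixing properties of the chain under $p$, not on the particular policy, so the per-policy Lipschitz bound of the first part holds uniformly and in particular at $\pi\ust_\eta$. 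As a backup avoiding the LP, one can instead mix $\hat\pi$ with a Slater policy $\pi_s$, set the mixing weight $\alpha$ proportional to $\eta$ to restore feasibility, and reproject onto $\Pi_{F}$; Lipschitzness of $\bar{r}$ in the policy then bounds the extra reward loss by $O(\alpha)=O(\eps)$.
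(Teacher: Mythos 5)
Your proposal is correct and follows essentially the same route as the paper: the paper's proof also tightens the cost budget to $c_{ub} - \eps\kappa_p|X|^2$, invokes its CMDP perturbation theorem (Theorem~\ref{th:perturbation_cmdp}, whose Lipschitz constant $\frac{\hat{\eta}}{\eta}$ is precisely your dual-multiplier slope $L$, finite under the strict-feasibility Assumption~\ref{assum:strict} and proved via the occupation-measure LP duality you sketch), and then discretizes the tightened optimum so that, by the first part of the lemma, the result lies in $\Pi_{F}$, is feasible for the original budget, and has reward at least $r\ust - \eps\kappa_p|X|^2\left(\frac{\hat{\eta}}{\eta} + 1\right)$. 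Your observations that the Slater condition is indispensable and that $\kappa_p$ is policy-uniform are both consistent with the paper's treatment.
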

\begin{proof}
~\eqref{ineq:lemma1_1} follows directly from Lemma~\ref{lemma:cho_meyer}. To prove the result on discretization error, construct a modified form of the CMDP~\eqref{cmdp} in which the bound on cost is replaced by $c_{ub}-  \epsilon \kappa_p |X|^2$.  Using Theorem~\ref{th:perturbation_cmdp} we conclude that the optimal reward of this modified CMDP is lower bounded by $r\ust - \epsilon \kappa_p |X|^2 \frac{\hat{\eta}}{\eta} $. Consider the discretization to this solution. It follows from~\eqref{ineq:lemma1_1} that this discretized solution is feasible for the  problem
$$
 \max_{ \pi \in \Pi_{F}: \bar{c}(\pi) \le c_{ub}    } \bar{r}(\pi),
$$
while its reward is lower bounded by $r\ust - \epsilon \kappa_p |X|^2 \frac{\hat{\eta}}{\eta} - \epsilon \kappa_p |X|^2$ (denote this by $y$). Proof is then completed by noting that the optimal discretized solution to the problem 
$$
\max_{ \pi \in \Pi_{F}: \bar{c}(\pi) \le c_{ub}    } \bar{r}(\pi)
$$
has a reward which is necessarily greater than $y$.
\end{proof}

\section{System Model}
We consider the MDP described in Section~\ref{Introduction}, assuming,
without loss of generality, that $r(x,u),c(x,u) \in [0,1]$ for all $ (x,u) \in X \times U$.
We denote by $\Pi_{SD}$ the set of all stationary deterministic policies that map $X$ into $U$,
by $\Pi_{S}$ the set of all stationary possibly randomized policies. In general, solving a CMDP requires randomization. Note that the set of stationary randomized policies is uncountably infinite. Thus, while deriving regret bounds, we will assume that we restrict ourselves to a finite set of policies $\Pi_{F}$. $\Pi_F$ could be obtained by discretizing the simplex $\Delta(U)$. Thus in each state $x$, a policy from the set $\Pi_F$ will pick an element from this finite set, and randomize amongst the actions using this distribution. Denote
$$
\Pi\ust(\te) := \arg \max_{\pi \in \Pi_{F}} \left\{\bar{r}(\te,\pi), \mbox{ s.t. }\bar{c}(\te,\pi) \le c_{ub}\right\},
$$ 
and let $r\ust(p)$ the optimal value.

\begin{definition}(Unichain MDP)
	Under a stationary policy $\pi$, let
	$\tau^{\pi}_{x,y}$ denote the time taken to hit the state $y$ when started in state $x$. The MDP is called unichain if $\bE [\tau^{\pi}_{x,y}]$ is finite for all $(x,y,\pi)$. 
\end{definition}

\begin{definition}(Mixing Time)\label{def:cond}
	For a unichain MDP with parameter $p$, its mixing time $T_{p}$ is defined as 
	\begin{align*}
		T_p := \max_{\pi \in \Pi_{s}}  \max_{x,y\in X} \bE [\tau^{\pi}_{x,y}].
	\end{align*}
	Its ``conductivity'' $\kappa_p$ is 
	\begin{align*}
		\kappa_p := \max_{\pi \in \Pi_s}  \max_{x\in X}\frac{\max_{y\neq x} \bE [\tau^{\pi}_{x,y}]  }{2\bE [\tau^{\pi}_{x,x}] }.     
	\end{align*}
\end{definition}
\begin{definition}(Kullback-Leibler divergence) For $p_2=\{p_2(x)\}$ absolutely continuous with respect to $p_2=\{p_2(x)\}$ the KL-divergence between them is	
	\begin{align}
		KL(p_1,p_2):=\sum_{x \in X} p_1(x) \log \frac{p_1(x)}{p_2(x)}.
	\end{align}    
\end{definition}

\begin{definition}(Gaps)
The reward gap is defined as follows,
	$$
	\Delta_{\min,r} :=   R\ust(p)  -  \max_{\{\pi \in \Pi_F : \bar{c}(\pi,p)\le c_{ub}\}} \bar{r}(\pi,p).
	$$
	The cost gap is defined similarly as follows,
	$$
\Delta_{\min,c} :=   \min_{\left\{\pi \in \Pi_F : \bar{c}(\pi,p) > c_{ub}\right\}} \bar{c}(\pi,p) - c_{ub}.
$$		
$	\Delta_{\min,r}$ is the reward gap, while $\Delta_{\min,c}$ is the cost gap.
$\blacksquare$
\end{definition}	

For two integers $x,y$, we use $[x,y]$ to denote the set $\{x,x+1,\ldots,y\}$ and for $a,b\in\mathbb{R}$ we let $a\vee b := \max\{a,b\}$.

\begin{assumption}\label{assum:1}
	We assume that the following information is known about the unknown transition probabilities $p(x,y,u)$ :
	\begin{itemize}
		\item the set of tuples $(x,y,u)$ for which $p(x,y,u)=0$, 
		\item a lower bound $p_{\min}$ on the non-zero transition probabilities,
		\begin{align}\label{def:p_min}
			p_{\min} := \min\limits_{(x,y,u): p(x,y,u)>0} p(x,y,u).
		\end{align}
	\end{itemize} 
	\rm We let $\Theta$ be the following set 
	\begin{align}
		& \Theta:=\notag \\
		\Bigg\{ &\te \in [0,1]^{|X| \times |X| \times |U|}: \te(x,y,u)=0\mbox{ if  }~p(x,y,u)=0, \sum_{y \in X} \te(x,y,u) =1
		\;\forall\; (x,u), \te(x,y,u)\ge 0 \Bigg\}. \label{def:Theta}
	\end{align}
	We occasionally refer to $\te \in \Theta$ as a ``parameter" describing the model or transition probabilites.
$\blacksquare$
\end{assumption}

Learning Regrets: The work~\cite{singh2020learning} shows that in order to judge the performance of a learning algorithm $\phi$ designed to solve the CMDP~\eqref{cmdp} when the controlled transition probabilities $p$ are unknown, an appropriate metric is the tuple consisting of the following two regrets: a) Reward regret $\cR_{r}(T;\phi)$, and b) Cost Regret, $\cR_{c}(T;\phi)$,
\ali{
	\cR_r(T) :&= r\ust(p)T -  \sum_{t=1}^{T}  r(x(t),u(t)),\\
	\cR_c(T) :&=  \sum_{t=1}^{T}  c(x(t),u(t)) - T (c_{ub} + \Delta_{\min,c}).
}
$	\cR_r(T)$ keeps track of the sub-optimality in reward collection, while $	\cR_c(T)$ measures the amount of ``over-utilization" of the control cost during the learning phase. Note that the definitions of regrets imply that our benchmark is $\Pi_F$. In what follows, we denote $\tilde{c}_{ub}:= c_{ub}+\Delta_{\min,c}$. a recent work on learning in constrained MDPs is~\cite{efroni2020exploration}.
\section{The RBMLE-Based Learning Algorithm}\label{sec:rbmle}
If $\te =  \{\te(x,y,u): (x,y,u) \} \in \Theta$ denotes an MDP parameter (controlled transition probabilities), denote by $\te(x,u)$ the vector $\left\{\te(x,y,u):y\in X\right\}$.
Let $n(x,u;t)$ be the number of times an action $u$ has been applied in state $x$ until time $t$, and by $n(x,y,u;t)$ the number of $x\to y$ one-step transitions under the application of action $u$. Let $\hat{p}(t)=\left\{ \hat{p}(x,y,u;t) : x,y \in X, u \in U \right\}$ be the empirical estimate of $p$ at time $t$, with $\hat{p}(x,y,u;t)$ the MLE of $p(x,y,u)$ at time $t$,
\begin{align}\label{def:empirical}
	\hat{p}(x,y,u;t) : =\frac{n(x,y,u;t)}{n(x,u;t)\vee 1} \;\forall\; x,y \in X \text{ and } u \in U .
\end{align} 
{\bf{The RBMLE algorithm:}} 
The algorithm evolves in an episodic manner. We let $\tau_k$ denote the starting time of the $k$-th episode, and $\cE_k :=[\tau_k,\tau_{k+1} -1] $ the set of time-slots that comprise it. The episode durations increase exponentially with episode index, with $|\cE_k|=2^k$. Clearly $\tau_k = \sum_{\ell = 1}^{k-1}|\cE_\ell|$. Throughout we abbreviate $\hat{p}(\tau_k)$ as $\hat{p}_{k}$, $n(x,y;\tau_k)$ as  $n_{k}(x,y)$ and $n(x,y,u;\tau_k)$ as  $n_{k}(x,y,u)$.
At the beginning of each episode $\cE_k$, the RBMLE algorithm determines the following quantities\footnote{Throughout the paper, a pre-specified priority order is used to choose a particular maximizer in $arg\max$ if needed.}:
\begin{enumerate}[(i)]
	\item  A ``reward-biased MLE'' of the true system parameter, which is denoted $\te_k$:
	\begin{align}
		&\te_k \in \arg\max_{\te \in \Theta} \bigg\{ \max_{\pi \in \Pi_{S}: \bar{c}(\te,\pi) \le c_{ub}}\bigg\{ \alpha(\tau_k) \bar{r}(\theta,\pi) -\sum_{(x,u)} n_k(x,u)KL\left( \hat{p}_{k}(x,u),\te(x,u)  \right) \bigg\} \bigg\}, \label{eq:rbmle_2}
		\end{align}
where 		
		\begin{align}
\alpha(t) :=a\log\left( t^b |X|^{2} |U|\right),\label{eq:alpha}
	\end{align}
and the constants $a,b$ are chosen by the agent and must satisfy the following conditions,
\ali{
b>2, \text{ and } 
a>\frac{|X|^3|U|}{2p_{\min} \Delta_{\min}}.\label{cond:a,b}
}
\item A stationary (possibly randomized) policy 
\ali{
	\pi_k \in  \underset{\pi \in \Pi_{F}:  \bar{c}(\te_k,\pi) \le c_{ub}}{\arg\max}\bar{r}(\te_k,\pi).\label{def:pi_k}
}
\end{enumerate}
Thereafter, it chooses controls for the current episode, i.e. for times $t \in \cE_k$ by sampling $u(t)$ from the distribution $\pi_k (x(t))$.

\textbf{Intuitive Description of RBMLE}: The RBMLE algorithm takes an optimistic view towards solving the problem by giving a ``push" or bias towards parameters with higher average rewards. As we will show, this allows it to explore the parameter space adequately, so that its ``learning regret" is low. However, since it has to also ensure that the time-average cost expenditures are below $c_{ub}$ (with a high probability), it also enforces the constraint $\bar{c}(\te,\pi)\le c_{ub}$.

%
%

\section{Preliminary Results}
{\bf{An equivalent Index description of the RBMLE learning algorithm:}}
We begin by showing that the RBMLE algorithm is an ``index rule"~\citep{gittins2011multi}. Though this characterization does not yield us simple computational procedures to implement the RBMLE algorithm, it is used while analyzing its learning regret.
\begin{lemma}\label{lemma:rbmle_index}
At the beginning of each episode $\cE_k$, RBMLE attaches an index $I_k(\pi)$ to each $\pi\in\Pi_{S}$,
\begin{align}\label{eq:index}    
	I_k(\pi):=\max_{\te \in \Theta: \bar{c}(\pi,\te)\le c_{ub}  }\bigg\{\alpha(\tau_k) \bar{r}(\theta,\pi) -\sum_{(x,u)} n_k(x,u)KL\left( \hat{p}_{k}(x,u),\te(x,u)  \right) \bigg\}.
\end{align}
In case $\bar{c}(\pi,\te)>c_{ub}$ for all $\te\in \Theta$, we let $I_k(\pi)= - \infty$. The policy $\pi_k$, as defined in~\eqref{def:pi_k} has the largest index, i.e.,
\begin{align}
	\pi_k \in \arg\max_{\pi \in \Pi_{S}} I_k(\pi). 
\end{align}
Within $\cE_k$, RBMLE implements the policy with the highest value of the RBMLE index. $\blacksquare$
\end{lemma}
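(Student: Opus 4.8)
The plan is to recognize that both the two-step RBMLE construction and the index maximization are merely two different orders of maximizing a single joint objective over a common feasible set, and then to exploit the fact that the Kullback--Leibler penalty does not depend on the policy. Introduce the shorthand $J(\te,\pi) := \alpha(\tau_k)\bar{r}(\te,\pi) - \sum_{(x,u)} n_k(x,u) KL\left(\hat{p}_k(x,u),\te(x,u)\right)$ for the objective common to~\eqref{eq:rbmle_2} and~\eqref{eq:index}, and let $\cF := \left\{(\te,\pi) : \bar{c}(\te,\pi)\le c_{ub}\right\}$ denote the joint feasible set. The first step is the bookkeeping identity that maximizing over a jointly-constrained pair is independent of which variable is peeled off first:
$$
\max_{\pi} I_k(\pi) = \max_\pi \max_{\te:(\te,\pi)\in\cF} J(\te,\pi) = \max_{(\te,\pi)\in\cF} J(\te,\pi) = \max_\te \max_{\pi:(\te,\pi)\in\cF} J(\te,\pi),
$$
whose right-hand side is exactly the value attained by $\te_k$ in~\eqref{eq:rbmle_2}. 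Call this common optimal value $V$; the convention $I_k(\pi)=-\infty$ for policies infeasible under every $\te$ ensures such policies never affect the maximization, so the identity holds irrespective of them.

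The crux---and the reason the two-step rule agrees with the index rule---is that the penalty $\sum_{(x,u)} n_k(x,u) KL(\hat{p}_k(x,u),\te(x,u))$ is constant in $\pi$. Hence, for the fixed parameter $\te_k$, maximizing $J(\te_k,\pi)$ over feasible $\pi$ and maximizing $\bar{r}(\te_k,\pi)$ over feasible $\pi$ differ only by the positive scaling $\alpha(\tau_k)$ and an additive $\pi$-independent constant, so the two problems share the same maximizers. The policy $\pi_k$ chosen in~\eqref{def:pi_k} therefore satisfies $J(\te_k,\pi_k)=\max_{\pi:(\te_k,\pi)\in\cF}J(\te_k,\pi)$, and since $\te_k$ attains the outer maximum, the pair $(\te_k,\pi_k)$ attains the joint maximum $V$.

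It then remains to translate this into a statement about the index of $\pi_k$. Since $(\te_k,\pi_k)\in\cF$, the parameter $\te_k$ is admissible in the maximization defining $I_k(\pi_k)$, giving $I_k(\pi_k)\ge J(\te_k,\pi_k)=V$; conversely $I_k(\pi_k)\le\max_\pi I_k(\pi)=V$ by the opening display. Hence $I_k(\pi_k)=V=\max_\pi I_k(\pi)$, which is precisely $\pi_k\in\arg\max_\pi I_k(\pi)$.

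The one point that genuinely needs care is the alignment of the policy classes appearing in~\eqref{eq:rbmle_2}, \eqref{def:pi_k} and~\eqref{eq:index}: the argument above is valid once the inner maximization inside the reward-biased MLE and the index maximization range over the same class from which $\pi_k$ is drawn---namely the benchmark set $\Pi_F$---since otherwise the value $V$ in the two displays need not coincide. I expect pinning down this consistency of the feasible policy sets, rather than any analytic subtlety, to be the main obstacle, as the substance of the lemma is just the commutativity of the nested maxima combined with the $\pi$-independence of the KL penalty.
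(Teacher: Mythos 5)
Your proof is correct and is precisely the argument the paper leaves implicit (the lemma is stated with no proof at all): the index rule is just the two-step construction \eqref{eq:rbmle_2} and \eqref{def:pi_k} with the nested maxima over the joint feasible set $\{(\te,\pi):\bar{c}(\te,\pi)\le c_{ub}\}$ exchanged, using that the KL penalty is independent of $\pi$ and that $\alpha(\tau_k)>0$, with the $-\infty$ convention disposing of policies infeasible under every $\te$. You are also right to flag the mismatch between the policy classes --- \eqref{eq:rbmle_2} takes the inner maximum over $\Pi_S$ while \eqref{def:pi_k} draws $\pi_k$ from $\Pi_F$ and \eqref{eq:index} indexes all of $\Pi_S$ --- which is a genuine inconsistency in the paper's statement rather than in your argument; your proof is valid exactly once all three maximizations range over a common class.
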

 Define the following ``confidence interval'' $\cC(t)$ at time $t$ associated with the empirical estimate $\hat{p}(t)$,
	\begin{align}\label{def:d1}
	\cC(t): =
	\bigg\{ \te \in \Theta: |\te(x,y,u)-\hat{p}(x,y,u;t) |  \le d_1(x,u;t),\forall (x,y,u) \in X \times X \times U \bigg\},
	\end{align}
	where
	\begin{align}\label{eq:d1}
	d_1(x,u;t):= \sqrt{\frac{ \log\left( t^b |X|^{2} |U|\right) }{n(x,u;t)}  },
	\end{align}
and $b$ is as in~\eqref{cond:a,b}. Define also the set $\cG_1$,
\begin{align}\label{eq:goodset}
	\cG_1 := \left\{\omega:  p \in \cC(t), \; \forall\; t \in \mathbb{N} \right\}.
\end{align}

	

The following result shows concentration of $\hat{p}(t)$ around the true MDP's parameter $p$.
\begin{lemma}\label{lemma:confidence}
		The probability that $p$ lies in $\cC(t)$ is bounded as follows:
						\begin{align*}
			\bP(p \in \cC(t))> 1-\frac{2}{t^{2b-1}|X|^2|U|},\;\forall\;  t \in \mathbb{N}  .
		\end{align*}

	\end{lemma}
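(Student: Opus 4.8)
The plan is to reduce the statement to a union bound over the $|X|^2|U|$ transition triples $(x,y,u)$ and then, for each fixed triple, to control the deviation of the empirical frequency $\hat{p}(x,y,u;t)$ from $p(x,y,u)$ by a concentration argument. Writing out the complement, the event $p \notin \cC(t)$ holds precisely when there exists a triple $(x,y,u)$ with $|\hat{p}(x,y,u;t) - p(x,y,u)| > d_1(x,u;t)$, so by the union bound it suffices to establish that for each fixed $(x,y,u)$,
$$
\bP\big( |\hat{p}(x,y,u;t) - p(x,y,u)| > d_1(x,u;t) \big) \le \frac{2}{t^{2b-1}|X|^4|U|^2},
$$
after which summing over the $|X|^2|U|$ triples yields the claimed bound $2/(t^{2b-1}|X|^2|U|)$.

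The key difficulty is that $\hat{p}(x,y,u;t)$ is an average of transition indicators over the $n(x,u;t)$ visits to the pair $(x,u)$, and $n(x,u;t)$ is itself random and statistically entangled with the transition outcomes (each realized next state shapes the future trajectory and hence how often $(x,u)$ is revisited). A direct application of Hoeffding's inequality, which requires a deterministic sample size, is therefore not immediately valid. I would resolve this by the standard peeling over the number of visits: letting $Z_i$ denote the indicator that the $i$-th visit to $(x,u)$ transitions into $y$, the Markov property guarantees that $\{Z_i\}_{i\ge 1}$ is an i.i.d.\ $\mathrm{Bernoulli}(p(x,y,u))$ sequence, and on the event $\{n(x,u;t)=n\}$ we have $\hat{p}(x,y,u;t) = \frac{1}{n}\sum_{i=1}^n Z_i$. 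Consequently the deviation event is contained in $\bigcup_{n=1}^{t} \{\, |\frac{1}{n}\sum_{i=1}^n Z_i - p(x,y,u)| > \sqrt{\log(t^b|X|^2|U|)/n}\,\}$, an event defined purely in terms of the i.i.d.\ sequence, with no residual conditioning on the trajectory.

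For each fixed $n$ the sample size is now deterministic, so Hoeffding's inequality for averages of $[0,1]$-valued i.i.d.\ variables applies and gives $2\exp(-2 n \cdot \log(t^b|X|^2|U|)/n) = 2/(t^b|X|^2|U|)^2$, where the precise choice of $d_1$ in~\eqref{eq:d1} makes the factor $n$ inside the exponent cancel exactly. Summing this over $n = 1, \ldots, t$ (the possible visit counts up to time $t$) produces the extra factor $t$ that reduces the exponent from $2b$ to $2b-1$, yielding the per-triple bound above; the case $n=0$ contributes nothing since the confidence constraint is then vacuous. I expect the main obstacle to be exactly the justification that the union-over-$n$ event dominates the true deviation event despite the dependence between $n(x,u;t)$ and the $Z_i$; the clean way to phrase this is to regard the i.i.d.\ sequence $\{Z_i\}$ attached to each $(x,u)$ as pre-sampled and to note that $n(x,u;t) \le t$ almost surely, so that the random-sample-size event is a subset of the fixed-sample-size union, allowing Hoeffding to be invoked level by level without any measurability complications.
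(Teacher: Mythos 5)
Your proposal is correct and follows essentially the same route as the paper's proof: a Hoeffding-type bound at fixed visit count giving $2/(t^{b}|X|^{2}|U|)^{2}$ per triple, then a union bound over the possible counts $n \le t$ (producing the drop from exponent $2b$ to $2b-1$) and over the $|X|^{2}|U|$ triples. The only difference is that you spell out the peeling/pre-sampling argument justifying the reduction from the random sample size $n(x,u;t)$ to fixed $n$, which the paper compresses into the phrase ``utilizing union bound on the number of plays''; this is a welcome clarification, not a divergence.
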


For a stationary policy $ \pi\in \Pi_{S}$, define $\theta_{k,\pi}$ as follows
\begin{align}\label{eq:index_theta}    
	\theta_{k,\pi} \in \arg \max_{\te \in \Theta: \bar{c}(\pi,\te)\le c_{ub}}\bigg\{\alpha(\tau_k) \bar{r}(\theta,\pi) -\sum_{(x,u)} n_k(x,u)KL\left( \hat{p}_{k}(x,u),\te(x,u)  \right) \bigg\}.
\end{align} 

We now derive a lower bound on the index of any 
optimal stationary policy $\pi \ust \in \Pi \ust (p)$ that holds with
high probability. 
\begin{lemma}\label{lemma:suff_prec_optimal}
	On the set $\cG_1$, the index $I_k(\pi\ust)$ of any optimal policy $\pi \ust \in \Pi\ust(p)$ is lower bounded as follows:
	\begin{align*}
		I_k(\pi\ust) \geq  \alpha(\tau_k)(1-\gamma)~R\ust(p), ~\forall k =1,2,\ldots,K,
	\end{align*}
	where 
	\ali{
		1 \ge \gamma \ge \frac{|X|^3|U|}{2a\cdot p_{\min} J \ust(p)}.\label{cond_gamma}
	} 
\end{lemma}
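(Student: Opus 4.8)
The plan is to lower-bound the maximization defining $I_k(\pi\ust)$ by evaluating its objective at the single feasible point $\te = p$, the true parameter, and then to control the resulting $KL$ penalty on the good set $\cG_1$. First I would verify feasibility of $\te = p$ for the inner constraint in~\eqref{eq:index}: since $\pi\ust \in \Pi\ust(p)$ is optimal (hence feasible) for the true CMDP, it satisfies $\bar{c}(\pi\ust,p) \le c_{ub}$, so $\te = p$ is admissible. As $I_k(\pi\ust)$ is a maximum over all admissible $\te$, substituting $\te = p$ and using $\bar{r}(p,\pi\ust) = R\ust(p)$ gives
\[
I_k(\pi\ust) \ge \alpha(\tau_k)\, R\ust(p) - \sum_{(x,u)} n_k(x,u)\, KL\bigl(\hat{p}_k(x,u),\, p(x,u)\bigr).
\]
It then remains only to bound the penalty term from above.

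The core step is to estimate the $KL$ penalty on $\cG_1$. There, $p \in \cC(\tau_k)$, so $|\hat{p}_k(x,y,u) - p(x,y,u)| \le d_1(x,u;\tau_k)$ for every $(x,y,u)$, where $d_1^2(x,u;\tau_k) = \log(\tau_k^b|X|^2|U|)/n_k(x,u)$ by~\eqref{eq:d1}. For transitions with $p(x,y,u)=0$, Assumption~\ref{assum:1} forces $n_k(x,y,u)=0$ and hence $\hat{p}_k(x,y,u)=0$, so such coordinates contribute nothing to the divergence (and $\hat{p}_k(x,u)$ stays absolutely continuous with respect to $p(x,u)$, keeping the $KL$ finite). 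For the at most $|X|$ surviving coordinates one has $p(x,y,u)\ge p_{\min}$, so a reverse-Pinsker / $\chi^2$-type estimate together with the floor $p_{\min}$ yields a per-pair bound of the form $KL\bigl(\hat{p}_k(x,u),p(x,u)\bigr) \le \frac{|X|^2}{2\,p_{\min}}\, d_1^2(x,u;\tau_k)$. Multiplying by $n_k(x,u)$ cancels the $1/n_k(x,u)$ inside $d_1^2$, leaving $\frac{|X|^2}{2\,p_{\min}}\log(\tau_k^b|X|^2|U|)$ per pair; summing over the $|X||U|$ pairs $(x,u)$ and invoking $\log(\tau_k^b|X|^2|U|) = \alpha(\tau_k)/a$ from~\eqref{eq:alpha} gives
\[
\sum_{(x,u)} n_k(x,u)\, KL\bigl(\hat{p}_k(x,u),\, p(x,u)\bigr) \;\le\; \frac{|X|^3 |U|}{2\, a\, p_{\min}}\, \alpha(\tau_k).
\]

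Combining the two displays,
\[
I_k(\pi\ust) \;\ge\; \alpha(\tau_k)\Bigl( R\ust(p) - \frac{|X|^3 |U|}{2\, a\, p_{\min}} \Bigr),
\]
and the claimed inequality $I_k(\pi\ust) \ge \alpha(\tau_k)(1-\gamma)R\ust(p)$ holds exactly when $\gamma\, R\ust(p) \ge \frac{|X|^3|U|}{2\,a\,p_{\min}}$, i.e.\ under hypothesis~\eqref{cond_gamma}. The main obstacle I anticipate is the $KL$ estimate in the middle paragraph: one must pass from the coordinatewise $\ell_\infty$ control supplied by $\cC(\tau_k)$ to a bound on $KL\bigl(\hat{p}_k(x,u),p(x,u)\bigr)$, which requires the support floor $p_{\min}$ and careful bookkeeping of the zero-probability coordinates, and it is precisely this step that fixes the constant $\tfrac{|X|^3|U|}{2\,a\,p_{\min}}$ matching the stated condition on $\gamma$. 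Everything else is substitution and the clean cancellation $n_k(x,u)\, d_1^2(x,u;\tau_k) = \log(\tau_k^b|X|^2|U|)$.
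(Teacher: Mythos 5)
Your proposal is correct and follows essentially the same route as the paper's proof: lower-bound the maximum defining $I_k(\pi\ust)$ by plugging in the feasible point $\te = p$, bound each $KL\bigl(\hat{p}_k(x,u),p(x,u)\bigr)$ via inverse Pinsker with the $p_{\min}$ floor, and use the $\ell_\infty$ control from $\cG_1$ so that $n_k(x,u)\,d_1^2(x,u;\tau_k)$ cancels to $\alpha(\tau_k)/a$. In fact your bookkeeping is slightly more careful than the paper's: your final constant $\frac{|X|^3|U|}{2a\,p_{\min}}$ matches the lemma's stated condition~\eqref{cond_gamma}, whereas the paper's last display reads $\frac{|X|^{2}|U|}{2a\,p_{\min}R\ust(p)}$, apparently dropping a factor of $|X|$ when summing $(\sum_{y}|p-\hat{p}_k|)^2 \le |X|^2 d_1^2$ over the $|X||U|$ state--action pairs.
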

For a policy $\pi\in \Pi_{S}$, we let $\pi(x)$ denote the probability distribution over $U$ using which action is chosen when state is equal to $x$. Consider the set of actions $\{u: \pi(x,u)>0\}$ chosen by $\pi$ with positive probability in state $x$. Next, we show that if the state-action pairs $\{(x,u): \pi(x,u)>0\}$ corresponding to a sub-optimal policy $\pi$ have been visited for a sufficiently large number of times, then its index $I_{k}(\pi)$ is lower than the index of any optimal policy.

We now show that the quantity ${\te_{k,\pi}}$~\eqref{eq:index_theta} for a policy $\pi$ does not deviate ``a lot" from the MLE $\hat{p}_k$.  
\begin{lemma} \label{lemma:pinkser} 
Let $\pi\in \Pi_S$ be feasible for the true MDP $p$, i.e., $\bar{c}(\pi,p)\le c_{ub}$. Then,
		\begin{align*}
		|\theta_{k,\pi}(x,y,u)-\hat{p}_k(x,y,u)|\le d_2(x,u;\tau_k), (x,y,u) \in X \times X \times U,
		\end{align*}
	where
		\begin{align}\label{eq:d2}
	d_2(x,u;t):= \sqrt{\frac{\alpha(\tau_k)}{2~n_k(x,u) }\left(1 + \frac{1}{2a |X| p_{\min}}\right) }, \forall (x,u)\in X\times U.
		\end{align}
	Suppose that the number of visits to each state-action pair from the set $\{(x,u): \pi(x,u)>0\}$, until the time $\tau_k$ is lower bounded as follows,
	\begin{align}\label{eq:n_bound}
		n(x,u;\tau_k)> \frac{ \alpha(\tau_k)}{c^2} \; \forall (x,u) \text{ s.t. } \pi(x,u)>0.
	\end{align}
	Then, the distance between $\theta_{k,\pi}$ and $p$ is bounded as follows:
	\begin{align}\label{ineq:2}
		|\theta_{k,\pi}(x,y,\pi(x))-p(x,y,\pi(x))| < c\left[ \sqrt{.5\left(1+\frac{1}{a}\right)} + \frac{1}{\sqrt{a}}\right] ~~\forall\;x,y \in X .
	\end{align}

		\end{lemma}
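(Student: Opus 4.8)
The plan is to derive both inequalities from the optimality property of $\theta_{k,\pi}$ in~\eqref{eq:index_theta} together with Pinsker's inequality, reducing everything to a per-coordinate control of $KL(\hat{p}_k(x,u),\theta_{k,\pi}(x,u))$. I first record a localization fact: when $\pi(x,u)=0$ the chain induced by $\pi$ never uses $\theta(x,u)$, so neither $\bar{r}(\theta,\pi)$ nor $\bar{c}(\theta,\pi)$ depends on that block; the objective in~\eqref{eq:index_theta} then reduces there to the lone penalty $-n_k(x,u)KL(\hat{p}_k(x,u),\theta(x,u))$, forcing $\theta_{k,\pi}(x,u)=\hat{p}_k(x,u)$ and making the first claim trivial for such $(x,u)$. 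For $(x,u)$ with $\pi(x,u)>0$, Pinsker in the pointwise form $|\hat{p}_k(x,y,u)-\theta_{k,\pi}(x,y,u)|\le\|\hat{p}_k(x,u)-\theta_{k,\pi}(x,u)\|_{TV}\le\sqrt{\tfrac12 KL(\hat{p}_k(x,u),\theta_{k,\pi}(x,u))}$ shows that it suffices to prove $n_k(x,u)KL(\hat{p}_k(x,u),\theta_{k,\pi}(x,u))\le\alpha(\tau_k)\big(1+\tfrac{1}{2a|X|p_{\min}}\big)$, which yields exactly $d_2(x,u;\tau_k)$.

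To bound this KL budget I would use that $\theta_{k,\pi}$ maximizes $g(\theta):=\alpha(\tau_k)\bar{r}(\theta,\pi)-\sum_{(x,u)}n_k(x,u)KL(\hat{p}_k(x,u),\theta(x,u))$ over $\{\theta\in\Theta:\bar{c}(\pi,\theta)\le c_{ub}\}$. Comparing $g(\theta_{k,\pi})\ge g(q)$ against any feasible $q$ that agrees with $\hat{p}_k$ off the support of $\pi$ (so the off-support penalties cancel) and bounding the reward bias by $\bar{r}(\theta_{k,\pi},\pi)-\bar{r}(q,\pi)\le 1$ gives $\sum_{(x,u):\pi(x,u)>0}n_k(x,u)KL(\hat{p}_k(x,u),\theta_{k,\pi}(x,u))\le\alpha(\tau_k)+\sum_{(x,u):\pi(x,u)>0}n_k(x,u)KL(\hat{p}_k(x,u),q(x,u))$. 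Taking $q$ to be the weighted information projection of $\hat{p}_k$ onto the feasible set and dropping all but the targeted nonnegative term on the left isolates the coordinate $(x,u)$; the residual $\sum n_k KL(\hat{p}_k,q)$ is the cost of repairing $\hat{p}_k$ to feasibility, which I claim is at most $\alpha(\tau_k)/(2a|X|p_{\min})$. Since $\pi$ is feasible for the true model, $p$ itself (restricted to the support of $\pi$ and extended by $\hat{p}_k$) is one admissible $q$, so the residual is finite; the sharp bound uses that $\hat{p}_k$ lies in the confidence set $\cC(\tau_k)$ around the feasible $p$ on $\cG_1$, together with $KL\le\chi^2$ and $p(x,y,u)\ge p_{\min}$.

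For~\eqref{ineq:2} I would add and subtract $\hat{p}_k$ and apply the triangle inequality, $|\theta_{k,\pi}(x,y,\pi(x))-p(x,y,\pi(x))|\le d_2(x,\pi(x);\tau_k)+|\hat{p}_k(x,y,\pi(x))-p(x,y,\pi(x))|$, bounding the last term on $\cG_1$ by the confidence radius $d_1(x,\pi(x);\tau_k)=\sqrt{\alpha(\tau_k)/(a\,n_k(x,\pi(x)))}$. Substituting the hypothesis $n_k(x,u)>\alpha(\tau_k)/c^2$ makes $d_2<c\sqrt{\tfrac12\big(1+\tfrac{1}{2a|X|p_{\min}}\big)}$ and $d_1<c/\sqrt{a}$, which combine to give the two terms in~\eqref{ineq:2}. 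The main obstacle is the claim in the second paragraph that the feasibility repair of $\hat{p}_k$ costs only the small correction $\alpha(\tau_k)/(2a|X|p_{\min})$: comparing naively against $p$ produces the full empirical-to-true divergence summed over the support of $\pi$, which is far larger, so the argument must instead exploit that $\hat{p}_k$ only marginally violates $\bar{c}(\pi,\cdot)\le c_{ub}$ and that its projection back into the feasible set displaces a bounded number of coordinates by an amount controlled by $d_1$ and $p_{\min}$. Checking that this projection neither inflates the KL beyond the stated correction nor disturbs the reward-bias accounting is the crux; the remaining steps are the optimality inequality, nonnegativity of $KL$, and Pinsker.
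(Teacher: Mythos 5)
Your skeleton---the optimality of $\theta_{k,\pi}$ in~\eqref{eq:index_theta} against a feasible comparator, bounding the reward bias by $1$ since $\bar{r}\in[0,1]$, dropping the remaining nonnegative KL terms to isolate one coordinate, per-coordinate Pinsker to get $d_2$, then the triangle inequality with $d_1$ on $\cG_1$ and the substitution $n_k(x,u)>\alpha(\tau_k)/c^2$---is exactly the paper's route. But the step you yourself flag as the crux is where the proposal breaks: you reject the comparator $q=p$ on the ground that it ``produces the full empirical-to-true divergence summed over the support of $\pi$, which is far larger,'' and you substitute an unproven information-projection\slash feasibility-repair construction whose claimed residual bound $\alpha(\tau_k)/(2a|X|p_{\min})$ is never established. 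That dismissal is mistaken, and the paper's proof consists of precisely the comparison you discarded: $p$ is admissible in~\eqref{eq:index_theta} because $\bar{c}(\pi,p)\le c_{ub}$ (this is the entire reason the lemma hypothesizes feasibility of $\pi$ for $p$), and on $\cG_1$ the weighted empirical-to-true divergence does \emph{not} grow with the visit counts. Indeed, by inverse Pinsker with $p(x,y,u)\ge p_{\min}$ together with $|\hat{p}_k(x,y,u)-p(x,y,u)|\le d_1(x,u;\tau_k)$,
\nal{
n_k(x,u)\,KL\left( \hat{p}_{k}(x,u),p(x,u)  \right) \le \frac{n_k(x,u)}{2p_{\min}}\Big(\sum_{y} |p(x,y,u)-\hat{p}_k(x,y,u)|\Big)^2 \le \frac{|X|^2}{2p_{\min}}\, n_k(x,u)\, d_1(x,u;\tau_k)^2 = \frac{|X|^2}{2a\,p_{\min}}\,\alpha(\tau_k),
}
since $n_k(x,u)\,d_1(x,u;\tau_k)^2=\log\left(\tau_k^b|X|^2|U|\right)=\alpha(\tau_k)/a$ exactly: the $1/\sqrt{n_k}$ confidence width cancels the $n_k$ weight. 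Summing over $(x,u)$ costs at most a further factor $|X||U|$, so the ``naive'' comparison already yields a residual of order $\alpha(\tau_k)/a$, which is the correction term inside~\eqref{eq:d2} up to bookkeeping in powers of $|X|,|U|$ (the paper's own displayed residual, $\frac{|X|}{2p_{\min}}\log(\cdot)$, also does not literally match the $\frac{1}{2a|X|p_{\min}}$ in~\eqref{eq:d2}, so exact constants are loose on both sides---but the mechanism is the one you rejected). Without this observation your KL budget is unestablished, and hence both displays of the lemma remain unproven.

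A secondary point: your off-support localization is unnecessary and slightly shaky. The paper obtains the first display for \emph{all} $(x,u)$ simultaneously simply by dropping every other nonnegative term from the sum $\sum_{(x,u)} n_k(x,u)KL(\cdot,\cdot)$; no argument about $\theta_{k,\pi}(x,u)=\hat{p}_k(x,u)$ off the support of $\pi$ is needed. Moreover, when $n_k(x,u)=0$ that identity cannot hold as stated ($\hat{p}_k(x,u)$ is then the zero vector by~\eqref{def:empirical}, not a distribution in $\Theta$), though the claim is vacuous there anyway since $d_2(x,u;\tau_k)=\infty$. Your final paragraph (triangle inequality, $d_1\le c/\sqrt{a}$, $d_2< c\sqrt{.5(1+1/a)}$ modulo the same constant bookkeeping) agrees with the paper's conclusion of~\eqref{ineq:2}.
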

		\begin{proof}
		Recall that the index $I_k(\pi)$~\eqref{eq:index} is given as follows,
	    \ali{    
		I_k(\pi)=\max_{\te \in \Theta: \bar{c}(\pi,\te)\le c_{ub}  }\bigg\{\alpha(\tau_k) \bar{r}(\theta,\pi) -\sum_{(x,u)} n_k(x,u)KL\left( \hat{p}_{k}(x,u),\te(x,u)  \right) \bigg\}.\label{recall:def_idx}
	}

Since $\te_{k,\pi}$ maximizes the expression on the r.h.s., the index can be written as follows, 
\begin{align}\label{rewrite_idx}
I_k(\pi)&= \alpha(\tau_k) \bar{r}(\theta_{k,\pi},\pi) -\sum_{(x,u)} n_k(x,u)KL\left( \hat{p}_{k}(x,u),\theta_{k,\pi}(x,u)  \right).
\end{align}
Since $\pi$ is feasible for the true MDP, we obtain the following
\ali{
I_k(\pi) \ge \alpha(\tau_k) \bar{r}(p,\pi) -\sum_{(x,u)} n_k(x,u)KL\left( \hat{p}_{k}(x,u),p(x,u)  \right).\label{ineq:1}
} 
By using inverse Pinsker's inequality we get
\ali{
KL\left( \hat{p}_{k}(x,u),p(x,u)  \right) \le \frac{1}{2 p_{\min}} \big(\sum\limits_{y\in X} |p(x,y,u)-\hat{p}_{k}(x,y,u)|\big)^2.
}
On the set $\cG_1$~\eqref{eq:goodset}, we have $|\te(x,y,u)-\hat{p}(x,y,u;t) |  \le d_1(x,u;t)$. Substituting this bound into the above inequality, we obtain the following
\ali{
KL\left( \hat{p}_{k}(x,u),p(x,u)  \right) \le  |X| \log\left( t^b |X|^{2} |U|\right).
}
Upon combining this with~\eqref{ineq:1}, we deduce that on $\cG_1$, the following holds,
\ali{
	I_k(\pi) \ge \alpha(\tau_k) \bar{r}(p,\pi) - |X| \log\left( t^b |X|^{2} |U|\right).
} 
Substituting the expression~\eqref{rewrite_idx} for $I_k(\pi)$ we get
\nal{
 \alpha(\tau_k) \bar{r}(\theta_{k,\pi},\pi) -\sum_{(x,u)} n_k(x,u)KL\left( \hat{p}_{k}(x,u),\theta_{k,\pi}(x,u)  \right) \ge \alpha(\tau_k) \bar{r}(p,\pi) - \frac{1}{2 p_{\min}}|X| \log\left( t^b |X|^{2} |U|\right),
}
or
\nal{
 \sum_{(x,u)} n_k(x,u)KL\left( \hat{p}_{k}(x,u),\theta_{k,\pi}(x,u)  \right) \le 	\alpha(\tau_k) \bar{r}(\theta_{k,\pi},\pi) - \alpha(\tau_k) \bar{r}(p,\pi) + \frac{1}{2 p_{\min}}|X| \log\left( t^b |X|^{2} |U|\right).
}
Since the average reward $\bar{r}(\theta,\pi) \in [0,1]$ for all $\theta \in \Theta$ and $\pi \in \Pi_{S}$, the above inequality reduces to
		\begin{align*}
		\sum_{(x,u)}	n_k(x,u)KL\left( \hat{p}_{k}(x,u),\theta_{k,\pi}(x,u)  \right) \le   \alpha(\tau_k) +\frac{1}{2 p_{\min}} |X| \log\left( t^b |X|^{2} |U|\right). 
		\end{align*}
		By using Pinsker's inequality~\citep{cover1999elements}, we can bound KL-divergence as follows:
		\begin{align*}
			|\theta_{k,\pi}(x,y,u)-\hat{p}_k(x,y,u)|^2 \le   \frac{1}{2}KL\left( \hat{p}_{k}(x,u),\te_{k,\pi}(x,u)  \right) \; \forall x,y \in X \text{ and } u \in U.
		\end{align*}
Upon combining the above two inequalities we deduce that on $\cG_1$ we have the following bound
\ali{
|\theta_{k,\pi}(x,y,u)-\hat{p}_k(x,y,u)|^2 \le \frac{\alpha(\tau_k) +\frac{|X|}{2 p_{\min}} \log\left( t^b |X|^{2} |U|\right)}{2~n_k(x,u) }.
}
Upon using $\alpha(t) =a\log\left( t^b |X|^{2} |U|\right)$~\eqref{eq:alpha} in the above, we get
\ali{
	|\theta_{k,\pi}(x,y,u)-\hat{p}_k(x,y,u)| \le \sqrt{\frac{\alpha(\tau_k)\left(1 + \frac{1}{2a |X| p_{\min}}\right)}{2~n_k(x,u) }}.
}
We now show~\eqref{ineq:2}. As is shown in Lemma \ref{lemma:confidence}, the distance between $p(x,y,u)$ and $\hat{p}_k(x,y,u)$ can be bounded by $d_1(x,u;\tau_k)$. We showed just now that the distance between $\theta_k(x,y,u)$ and $\hat{p}_k(x,y,u)$ can be bounded by $d_2(x,u;\tau_k)$. The proof then follows from the triangle inequality.
\end{proof}

\begin{corollary}\label{corollary:10}
Define the function
\ali{
	g(a) := \left[ \sqrt{.5\left(1+\frac{1}{a}\right)} + \frac{1}{\sqrt{a}}\right],
}
and let 
	\ali{
		c = \frac{\beta \Delta_{\min}}{g(a) \kappa_p |X|^2},
	}

where the parameter $\beta$ satisfies $\beta<1$. Let the number of visits to different state-action pairs satisfy the condition~\eqref{eq:n_bound}. Then,
{
	\ali{
		\bar{r}(\pi,\te_{k,\pi}) \le \bar{r}(\pi,p) + \beta \Delta_{\min}, \forall \pi \in \Pi_{S}.\label{ineq:bound_index}
	}
}	
Thus, the index $I_k(\pi)$ of a sub-optimal policy can be upper-bounded as follows,
\ali{
I_k(\pi) \le \alpha(T) \left[ \bar{r}(\pi,p) + \beta \Delta_{\min} \right].\label{upper_bound_idx}
}
Furthermore, if 
	\ali{	
		a > \frac{|X|^{2}|U|}{2(1-\beta) \Delta_{\min}p_{\min}}, \label{ineq:a_cond}
	}

then 
\ali{
I_k(\pi) \le I_k(\pi\ust), \mbox{ where } \pi\ust \in \Pi\ust(p).\label{ineq:index}
}
$\blacksquare$
\end{corollary}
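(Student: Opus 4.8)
The plan is to prove the three displayed claims in sequence, reusing the coordinate-wise bound already established in Lemma~\ref{lemma:pinkser}. Under the visit condition~\eqref{eq:n_bound}, that lemma gives $|\te_{k,\pi}(x,y,\pi(x))-p(x,y,\pi(x))| < c\, g(a)$ for every $x,y$ on the support of $\pi$. The first step is to convert this coordinate-wise closeness of the two parameters into closeness of their average rewards under the common policy $\pi$. For this I would invoke the sensitivity bound underlying Lemma~\ref{lemma:cho_meyer} (the same one used to obtain~\eqref{ineq:lemma1_1}), which controls the change in average reward by $\kappa_p|X|^2$ times the size of the parameter perturbation; this yields $\bar{r}(\pi,\te_{k,\pi})-\bar{r}(\pi,p)\le c\,g(a)\,\kappa_p|X|^2$. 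Substituting the prescribed value $c=\beta\Delta_{\min}/(g(a)\kappa_p|X|^2)$ makes the right-hand side collapse to exactly $\beta\Delta_{\min}$, which is~\eqref{ineq:bound_index}.

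For the index upper bound~\eqref{upper_bound_idx}, I would start from the representation~\eqref{rewrite_idx}, namely $I_k(\pi)=\alpha(\tau_k)\bar{r}(\te_{k,\pi},\pi)-\sum_{(x,u)}n_k(x,u)KL(\hat{p}_k(x,u),\te_{k,\pi}(x,u))$, and simply drop the nonnegative KL term to get $I_k(\pi)\le\alpha(\tau_k)\bar{r}(\te_{k,\pi},\pi)$. Applying~\eqref{ineq:bound_index} (which is the same average-reward quantity, with arguments in the opposite order) gives $I_k(\pi)\le\alpha(\tau_k)[\bar{r}(\pi,p)+\beta\Delta_{\min}]$, and since the bracket is nonnegative and $\alpha$ is increasing with $\tau_k\le T$, I may replace $\alpha(\tau_k)$ by $\alpha(T)$, obtaining~\eqref{upper_bound_idx}.

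The final claim~\eqref{ineq:index} is a comparison of this upper bound against the high-probability lower bound on the optimal index from Lemma~\ref{lemma:suff_prec_optimal}, $I_k(\pi\ust)\ge\alpha(\tau_k)(1-\gamma)R\ust(p)$. Keeping the matching factor $\alpha(\tau_k)$ (rather than $\alpha(T)$) on both sides, I would use that a sub-optimal $\pi$ satisfies $\bar{r}(\pi,p)\le R\ust(p)-\Delta_{\min}$, so that $I_k(\pi)\le\alpha(\tau_k)[R\ust(p)-(1-\beta)\Delta_{\min}]$. Thus $I_k(\pi)\le I_k(\pi\ust)$ reduces to the scalar inequality $\gamma R\ust(p)\le(1-\beta)\Delta_{\min}$, and since Lemma~\ref{lemma:suff_prec_optimal} permits $\gamma$ as small as its lower bound in~\eqref{cond_gamma}, this reduces in turn to a lower bound on $a$, namely condition~\eqref{ineq:a_cond}. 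The main obstacle I anticipate is the first step: pinning down the sensitivity constant so that the average-reward perturbation is bounded by exactly $c\,g(a)\,\kappa_p|X|^2$, using that only the support of $\pi$ enters and that Lemma~\ref{lemma:pinkser} bounds precisely those coordinates; a secondary bookkeeping point is to reconcile the power of $|X|$ appearing when the $\gamma$-bound (with $J\ust(p)=R\ust(p)$) is translated into~\eqref{ineq:a_cond}.
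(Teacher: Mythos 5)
Your proposal is correct and follows essentially the same route as the paper's own proof: \eqref{ineq:bound_index} from Lemma~\ref{lemma:pinkser} together with the perturbation bound of Lemma~\ref{lemma:cho_meyer} (the chosen $c$ collapsing $c\,g(a)\,\kappa_p|X|^2$ to $\beta\Delta_{\min}$), \eqref{upper_bound_idx} by dropping the nonnegative KL term in \eqref{rewrite_idx}, and \eqref{ineq:index} by comparison with the lower bound of Lemma~\ref{lemma:suff_prec_optimal}, yielding condition \eqref{ineq:a_cond} via $R\ust(p)-\bar{r}(\pi,p)\ge\Delta_{\min}$. If anything, you are slightly more careful than the paper on two points it glosses over: keeping $\alpha(\tau_k)$ matched on both sides rather than mixing $\alpha(\tau_k)$ with $\alpha(T)$, and explicitly flagging the $|X|^2$ versus $|X|^3$ bookkeeping discrepancy between \eqref{cond_gamma} and \eqref{ineq:a_cond}.
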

\begin{lemma}\label{lemma:bound_2}
Let $\pi$ be an infeasible policy that satisfies the following, 
\ali{
\bar{c}(\pi,p)>c_{ub}+ (w+c) \kappa_p,\label{def:cost_margin}
} 
where $w>0$ satisfies~\eqref{cond:w}. Assume that the number of visits to different state-action pairs satisfy the condition~\eqref{eq:n_bound}. Then, on the set $\cG_1$, the index $I_k(\pi)$ can be upper-bounded as follows:
\ali{
I_k(\pi) \le \alpha(T) - \alpha(T) \frac{w^2}{c^2}.
}
If 
\ali{
		w \ge c \left[   1 + \frac{|X|^{2}|U|}{2a p_{\min}}  \right]^{.5},\label{cond:w}
}
then the index $I_k(\pi)$ is less than the index of an optimal policy.
\end{lemma}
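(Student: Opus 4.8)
The plan is to bound $I_k(\pi)$ directly from the index definition~\eqref{eq:index} and to show that the cost-infeasibility of $\pi$ forces the maximizing parameter $\theta_{k,\pi}$ of~\eqref{eq:index_theta} to sit far from the empirical estimate $\hat{p}_k$ at some \emph{visited} state-action pair, which makes the KL penalty large. I begin by writing $I_k(\pi)=\alpha(\tau_k)\bar{r}(\theta_{k,\pi},\pi)-\sum_{(x,u)}n_k(x,u)KL(\hat{p}_k(x,u),\theta_{k,\pi}(x,u))$, and then use $\bar{r}\le 1$ together with the nonnegativity of the KL terms to keep only a single summand: $I_k(\pi)\le \alpha(\tau_k)-n_k(x_0,u_0)KL(\hat{p}_k(x_0,u_0),\theta_{k,\pi}(x_0,u_0))$ for a bottleneck pair $(x_0,u_0)$ still to be identified.

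The heart of the argument is producing this pair. Because $\theta_{k,\pi}$ satisfies $\bar{c}(\pi,\theta_{k,\pi})\le c_{ub}$ while~\eqref{def:cost_margin} gives $\bar{c}(\pi,p)>c_{ub}+(w+c)\kappa_p$, the average-cost gap between $p$ and $\theta_{k,\pi}$ exceeds $(w+c)\kappa_p$. I would invoke the Cho--Meyer perturbation bound (Lemma~\ref{lemma:cho_meyer}), which controls the change in average cost by $\kappa_p$ times the $\ell_1$ perturbation of the policy-induced transition kernel $P^{\pi}$; this gives $\max_{x}\sum_{y}|P^{\pi}_{\theta_{k,\pi}}(x,y)-P^{\pi}_{p}(x,y)|>w+c$. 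Writing $P^{\pi}_{\theta}(x,y)=\sum_{u}\pi(x,u)\theta(x,y,u)$ and applying a convexity/averaging argument over the actions used by $\pi$ at the maximizing state then localizes the deviation to a single pair $(x_0,u_0)$ with $\pi(x_0,u_0)>0$, namely $\sum_{y}|\theta_{k,\pi}(x_0,y,u_0)-p(x_0,y,u_0)|>w+c$.

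Next I would pass from $p$ to $\hat{p}_k$. On $\cG_1$ the empirical estimate lies within $d_1$ of $p$~\eqref{eq:d1}, and the visit lower bound~\eqref{eq:n_bound} makes $d_1(x_0,u_0;\tau_k)<c/\sqrt{a}$; the additive constant $c$ in the margin~\eqref{def:cost_margin} is calibrated precisely to absorb this empirical error, so that the triangle inequality leaves $\sum_{y}|\theta_{k,\pi}(x_0,y,u_0)-\hat{p}_k(x_0,y,u_0)|\ge w$. Pinsker's inequality (as applied in Lemma~\ref{lemma:pinkser}) then turns this $\ell_1$ deviation into a lower bound of order $w^2$ on $KL(\hat{p}_k(x_0,u_0),\theta_{k,\pi}(x_0,u_0))$, and combining with $n_k(x_0,u_0)>\alpha(\tau_k)/c^2$ from~\eqref{eq:n_bound} yields $I_k(\pi)\le \alpha(\tau_k)-\alpha(\tau_k)\,w^2/c^2$, which is the claimed bound (with $\tau_k$ in place of $T$; since the coefficient $1-w^2/c^2$ will be negative, only its sign matters for the final comparison).

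The concluding assertion is then immediate: squaring~\eqref{cond:w} gives $w^2/c^2\ge 1+\frac{|X|^2|U|}{2ap_{\min}}$, so the bound becomes $I_k(\pi)\le -\alpha(\tau_k)\frac{|X|^2|U|}{2ap_{\min}}<0$, whereas Lemma~\ref{lemma:suff_prec_optimal} gives $I_k(\pi\ust)\ge \alpha(\tau_k)(1-\gamma)R\ust(p)\ge 0$; hence $I_k(\pi)<I_k(\pi\ust)$. The main obstacle is the localization-and-transfer bookkeeping of the middle two steps: turning a \emph{global} cost gap into a deviation concentrated at a single pair on which the visit bound~\eqref{eq:n_bound} is known to hold, and checking that the constant $c$ genuinely dominates the $\cG_1$ confidence width (tracking the $|X|$ and $1/\sqrt{a}$ factors coming from $d_1$) so that a clean $w$ survives the triangle inequality before Pinsker is applied.
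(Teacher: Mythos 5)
Your proposal retraces the paper's own proof essentially step for step: the infeasibility margin \eqref{def:cost_margin} plus the Cho--Meyer perturbation bound (Lemma~\ref{lemma:cho_meyer}) forces $\|\te_{k,\pi}-p\|\ge w+c$, the triangle inequality on $\cG_1$ (with $d_1\le c$ under the visit bound \eqref{eq:n_bound}) leaves a deviation of $w$ from $\hat{p}_k$, Pinsker plus $n_k(x,u)>\alpha(\tau_k)/c^2$ converts this into a KL penalty of order $\alpha w^2/c^2$, and the final comparison against the lower bound of Lemma~\ref{lemma:suff_prec_optimal} under \eqref{cond:w} is algebraically the same (the paper retains $R\ust(\te)\le 1$ where you invoke $\bar{r}\le 1$ up front and $I_k(\pi\ust)\ge 0$). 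If anything, you are slightly more careful than the paper, which never explicitly localizes the deviation to a visited pair $(x_0,u_0)$ with $\pi(x_0,u_0)>0$ and is equally loose about the Pinsker factor of $\frac{1}{2}$ and the $|X|^2$ calibration in Lemma~\ref{lemma:cho_meyer}.
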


\begin{proof}
	Throughout this proof we assume that we restrict to a sample path lying in $\cG_1$. Let $\te\in \Theta$ be such that $\bar{c}(\pi,\te)\le c_{ub}$. Since the cost incurred by $\pi$ on the system with parameter value equal to true value $p$ satisfies $\bar{c}(\pi,p)\ge c_{ub} +  (w+c)\kappa_p$, it follows from the perturbation bound in~Lemma \ref{lemma:cho_meyer} that $\|\te-p \|\ge w+c$. Thus, we have 
\ali{
\|\te - \hat{p}_k\| &= \| (\te -  p)  + (p - \hat{p}_k)\| \notag \\
& \ge  \| (\te -  p)\| - \|(p - \hat{p}_k)\| \notag \\
&\ge w+c - c \notag \\
&=w (say),\label{ineq:distance}
}
where the last inequality also uses the fact that on $\cG_1$ we have $\|p-\hat{p}_k\|\le d_1(x,u;t)$. Now, consider the problem~\eqref{eq:rbmle_2} that is to be solved for deriving $\te_k$. The expression within the braces in~\eqref{eq:rbmle_2} that corresponds to maximization over the set of policies $\Pi_S$ for a fixed $\te$ can be bounded as follows
\ali{
& \alpha(T) R\ust(\te) - \sum_{(x,u)} n_k(x,u) KL(\hat{p}_k(x,u), \te(x,u) ) \notag\\
& \le \alpha(T) R\ust(\te) -  \frac{\alpha(T)}{c^2} (\hat{p}_k - \te)^2 \notag\\
&\le \alpha(T) R\ust(\te) -  \frac{\alpha(T)}{c^2} w^2,\label{ineq:6}
}
where the first inequality follows from , while the second from~\eqref{ineq:distance}. It follows from Lemma~\ref{lemma:suff_prec_optimal} that the expression~\eqref{ineq:6} is less than the index of an optimal policy if the following holds
\nal{
	\alpha(\tau_k)R\ust(p)\left(1-\frac{|X|^{2}|U|}{2a p_{\min} R \ust(p)}\right) > \alpha(T) R\ust(\te) -  \frac{\alpha(T)}{c^2} w^2,
}
i.e.,
\nal{
	R\ust(p)\left(1-\frac{|X|^{2}|U|}{2a p_{\min} R \ust(p)}\right) > R\ust(\te) -  \frac{w^2}{c^2}
}
i.e.,
\nal{
	\frac{w^2}{c^2} \ge R\ust(\te)-R\ust(p) + \frac{|X|^{2}|U|}{2a p_{\min}}
}
Since $R\ust(\te)\le 1$, the above is clearly satisfied if we have
\ali{	
		w \ge c \left[   1 + \frac{|X|^{2}|U|}{2a p_{\min}}  \right]^{.5}.	
}
\end{proof}

\begin{lemma}\label{lemma:g2}
Define ${\cG}_2$ to be the following set 		
\begin{align}\label{eq:g2}
\cG_2 :=\left\{	\omega: n(x,u;T) \ge  \frac{y_{x,u}}{2}  - \sqrt{y_{x,u}\log T} \text{ for all } (x,u) \right\},
\end{align}
where $y_{x,u} := \sum_{k \in \cK_{x,u} } \Bigl\lfloor \frac{  |\cE_k | }{2 T_p} \Bigr\rfloor$  ,
and $\cK_{x,u}$ denotes the set of indices of those episodes up to time $T$ in which action $u$ is taken when state is equal to $x$. 	
Then,
$$
\bP\left({\cG}_2 \right) \geq 1- \frac{|X||U|}{T}.
$$ 
\end{lemma}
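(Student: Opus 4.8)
The plan is to lower bound $n(x,u;T)$ by a sum of block indicators and then apply a martingale concentration inequality. Fix a pair $(x,u)$. For each episode $k \in \cK_{x,u}$ (those in which the played policy $\pi_k$ selects action $u$ in state $x$ with positive probability), partition its time slots into $\lfloor |\cE_k|/(2T_p)\rfloor$ consecutive blocks, each of length $2T_p$; this is exactly the quantity summed to form $y_{x,u}$, so there are $y_{x,u}$ blocks in total across all active episodes. For the $j$-th such block let $Z_j = \mathbf{1}\{(x,u)\text{ is visited during block } j\}$, so that $n(x,u;T) \ge \sum_j Z_j$.

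The key step is to show that, conditioned on the history $\cF_{j-1}$ up to the start of block $j$, each block has success probability at least one half, i.e. $\bE[Z_j \mid \cF_{j-1}] \ge 1/2$. This is where the mixing time enters: whatever the state at the start of the block, the expected time to hit state $x$ is at most $T_p$ (Definition~\ref{def:cond}), so by Markov's inequality the probability of hitting $x$ within the $2T_p$ slots of the block is at least $1/2$, after which the pair $(x,u)$ is realized with the associated conditional probability since the block belongs to an active episode. This pins down both the block length $2T_p$ and the factor $1/2$ appearing in $y_{x,u}/2$.

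With the conditional-mean lower bound in hand, $\sum_j (Z_j - \bE[Z_j\mid\cF_{j-1}])$ is a martingale whose increments each take values in an interval of length one, and the sum of its conditional means is at least $y_{x,u}/2$. Applying the range form of Azuma--Hoeffding (equivalently, a stochastic-domination/Chernoff argument comparing $\sum_j Z_j$ to a $\mathrm{Binomial}(y_{x,u},1/2)$) gives
$$\bP\Big(n(x,u;T) < \tfrac{y_{x,u}}{2} - \sqrt{y_{x,u}\log T}\Big) \le \bP\Big(\textstyle\sum_j Z_j - \tfrac{y_{x,u}}{2} < -\sqrt{y_{x,u}\log T}\Big) \le \exp(-2\log T) = T^{-2}.$$
A union bound over all $|X||U|$ state--action pairs then yields $\bP(\cG_2) \ge 1 - |X||U|/T^2 \ge 1 - |X||U|/T$, as claimed.

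The main obstacle, and the point requiring the most care, is the per-block estimate $\bE[Z_j\mid\cF_{j-1}]\ge 1/2$ together with the fact that the number of blocks $y_{x,u}$ is itself a random quantity, determined by the data-dependent policies $\pi_k$. Because membership of episode $k$ in $\cK_{x,u}$ is $\cF$-measurable at the start of episode $k$, the block indicators can be organized so that each active block is decided before it is played; this lets the martingale bound be applied with the induced stopping-time structure rather than for a fixed deterministic number of summands. One must also confirm that the mixing-time hitting bound, stated for the true parameter $p$, applies uniformly over the randomized policies $\pi_k$, which is guaranteed by the maximization over $\pi \in \Pi_s$ in Definition~\ref{def:cond}.
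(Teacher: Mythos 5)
Your proof is correct in outline and is essentially the standard argument behind the result the paper invokes: the paper gives no in-text proof at all, simply citing Lemma 11 of \cite{techreport} (restated as Lemma~\ref{lemma:azum_visits}, from \cite{auer_07}), and your block decomposition --- $\lfloor |\cE_k|/(2T_p)\rfloor$ blocks of length $2T_p$ in each active episode, a per-block hitting probability of at least $1/2$ via Markov's inequality applied to $\bE[\tau^{\pi}_{\cdot,x}]\le T_p$, then Azuma--Hoeffding and a union bound over state--action pairs --- is exactly how that cited lemma is proved. So you have reconstructed the outsourced proof rather than found a different route.

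Two steps need more care than you give them. First, the random number of summands: you correctly flag that $y_{x,u}$ is data-dependent, but the fix you gesture at (a ``stopping-time structure'') does not by itself justify $\bP\bigl(\sum_j Z_j < y_{x,u}/2 - \sqrt{y_{x,u}\log T}\bigr)\le T^{-2}$, because the deviation threshold $\sqrt{y\log T}$ itself varies with the random $y$. The standard repair is a union bound over the at most $T$ possible values $m$ of $y_{x,u}$, applying Azuma at each fixed $m$; this degrades the per-pair failure probability from $T^{-2}$ to $T^{-1}$, which is precisely why the lemma asserts $1-|X||U|/T$ rather than the $1-|X||U|/T^{2}$ your computation would give. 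Second, the per-block bound $\bE[Z_j\mid\cF_{j-1}]\ge 1/2$ is valid for deterministic policies, where hitting $x$ guarantees that $u$ is played --- the setting of the cited lemma --- but here $\pi_k$ is a randomized policy from $\Pi_F$: hitting $x$ yields $u$ only with probability $\pi_k(x,u)$, which can be small, so the constant $1/2$ as you state it would need $\pi_k(x,u)$ folded in (or $\cK_{x,u}$ reinterpreted in the deterministic sense). You half-acknowledge this (``realized with the associated conditional probability'') but then assert $\ge 1/2$ anyway; note, however, that the paper inherits the same defect by citing the deterministic-policy lemma verbatim, so this is a flaw in the application rather than one peculiar to your argument.
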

Follows from (Lemma 11, \cite{techreport}).

		\section{Regret Analysis}
		We now utilize the results of previous section in order to obtain an upper bound on the expected regret of the RBMLE algorithm. We begin by decomposing the cumulative regrets $\cR_{r}(\phi,T), \cR_{c}(\phi,T)$ of the learning rule $\phi$, into the sum of episodic regrets as follows,
		\ali{
		\cR_{r}(\phi,T) &= \sum_{k} \Big( R\ust(p) |\cE_k| - \sum_{t\in\cE_k} r(x(t),u(t)) \Big),\\
		\cR_{c}(\phi,T) &= \sum_{k} \Big( \sum_{t\in\cE_k} c(x(t),u(t)) - c_{ub} |\cE_k|\Big).	
	}
		Since the RBMLE algorithm implements a single stationary policy $\pi_k$ during $\cE_k$, we obtain the following bounds on the expected regrets (Lemma 12, \cite{techreport}),
		\begin{align}
		\bE  \cR_{r}(\phi,p,T) &\le \sum_{k=1}^{K(T)} \bE \Big( R\ust(p) |\cE_k| - |\cE_k| \bar{r}(\pi_k,p) \Big)  + T_pK(T),\label{def:regret_dec_1}\\
		\bE  \cR_{c}(\phi,p,T) &\le \sum_{k=1}^{K(T)} \bE \Big(  |\cE_k| \bar{c}(\pi_k,p) - \tilde{c}_{ub} |\cE_k|\Big)  + T_pK(T),		\label{def:regret_dec_2}
		\end{align}
		where $K(T)$ is the number of episodes till $T$. The first summation can be regarded as the sum of the regrets arising from the policies chosen
		in the episodes $k=1,2, \ldots, K(T)$, assuming that each episode is started with a steady-state distribution for the state corresponding to the policy
		chosen in that episode. The last term $T_p K(T)$ arises because the system does not start in a steady state in each episode.
		
		We now state the main result of this paper. It shows that the expected regrets $\cR_{r}(T),\cR_{c}(T)$ of the RBMLE algorithm are bounded by $c' \log T + c''$ for all $T$:
		\begin{theorem}\label{th:regret}
The reward and cost regrets of RBMLE based-policy can be upper-bounded as follows
			\begin{align*}
			\bE  \cR_{r}(T)  &\le 2 |X||U|T_p(c_1n_c+K(T)) +  |X||U| + \frac{8}{|X|^2|U|} + \left(\log_2 T\right) T_p~\text{   for all } T,\\
			\bE  \cR_{c}(T) &\le 2 |X||U|T_p(c_1n_c+K(T)) +  |X||U| + \frac{8}{|X|^2|U|} + \left(\log_2 T\right) T_p~\text{   for all } T,			
			\end{align*}
		\end{theorem}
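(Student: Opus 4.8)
The plan is to begin from the per-episode regret decompositions~\eqref{def:regret_dec_1}--\eqref{def:regret_dec_2}, which already reduce matters to controlling $\sum_{k=1}^{K(T)}\bE\big(R\ust(p)|\cE_k|-|\cE_k|\bar{r}(\pi_k,p)\big)$ and its cost analogue. The two regrets are handled by one symmetric argument: since $|\cE_k|=2^k$ we have $K(T)\le\log_2 T$, which turns the trailing $T_pK(T)$ into the $(\log_2 T)T_p$ summand of each bound; and the remaining episodic sum accumulates only from ``bad'' episodes --- those in which a sub-optimal policy is played (for $\cR_r$), respectively an infeasible policy is played (for $\cR_c$). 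A feasible policy contributes a non-positive amount $\bar{c}(\pi_k,p)-\tilde{c}_{ub}\le-\Delta_{\min,c}$ to the cost sum, and an optimal policy contributes $0$ to the reward sum, so only bad episodes matter in either case.

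I would split each episodic sum across the good set $\cG:=\cG_1\cap\cG_2$ and its complement. On $\cG^c$ the integrand of any episode is bounded by $|\cE_k|$ (rewards and costs lie in $[0,1]$) and the whole sum by the horizon $T$. The $\cG_2^c$ part is then at most $T\cdot\bP(\cG_2^c)\le T\cdot\frac{|X||U|}{T}=|X||U|$ by Lemma~\ref{lemma:g2}, giving the $|X||U|$ summand. For $\cG_1$ I would instead charge episode-by-episode: episode $k$ contributes at most $|\cE_k|=2^k$ and only on the event $p\notin\cC(\tau_k)$, whose probability is at most $\frac{2}{\tau_k^{2b-1}|X|^2|U|}$ by Lemma~\ref{lemma:confidence}. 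Because $\tau_k\approx 2^k$ and $b>2$, the series $\sum_k 2^k\,2^{-k(2b-1)}$ is geometric and sums to a constant of the form $\frac{8}{|X|^2|U|}$.

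The core of the proof is the contribution on $\cG$. Here Corollary~\ref{corollary:10} shows that once the visit condition~\eqref{eq:n_bound} holds for every $(x,u)$ with $\pi(x,u)>0$, a sub-optimal policy has index below $I_k(\pi\ust)$, and Lemma~\ref{lemma:bound_2} shows the same for an infeasible policy whose cost margin satisfies~\eqref{def:cost_margin}; the gap constants are chosen (through $a,\beta,w,c$) precisely so that \emph{every} sub-optimal and \emph{every} infeasible policy in $\Pi_F$ falls under these hypotheses, leaving no borderline escapees. Since RBMLE plays the top-index policy (Lemma~\ref{lemma:rbmle_index}), a bad episode can occur on $\cG$ only while some relevant pair is under-explored, i.e.\ $n(x,u;\tau_k)\le\alpha(\tau_k)/c^2$. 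The decisive device is a mixing-time conversion: if $\pi_k$ is played and $\pi_k(x,u)>0$, then $(x,u)$ is visited at least $\lfloor|\cE_k|/(2T_p)\rfloor$ times, so $|\cE_k|\le 2T_p(\text{visits to }(x,u)\text{ in }\cE_k)+2T_p$. Charging each bad episode to one of its under-explored pairs and summing, the visit terms cap at the exploration count $n_c$ per pair (hence $\le|X||U|\,c_1 n_c$ after the $2T_p$ factor), while the residual ``$+2T_p$'' per bad episode sums to the $2|X||U|T_p\,K(T)$ piece; together these reproduce $2|X||U|T_p(c_1 n_c+K(T))$.

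The step I expect to be hardest is this last conversion. The per-episode regret $|\cE_k|=2^k$ grows geometrically, so any bound that naively sums $|\cE_k|$ over bad episodes would be useless; the mixing-time inequality is what rescues the argument by charging each unit of episode length against a genuine visit to an under-explored pair, of which only $O(n_c)$ occur per pair on $\cG_2$. The delicate bookkeeping is to absorb the floor terms in $\lfloor|\cE_k|/(2T_p)\rfloor$, the fluctuation $\sqrt{y_{x,u}\log T}$ appearing in the definition of $\cG_2$, and the moving threshold $\alpha(\tau_k)/c^2$ (which itself grows with $k$) into the stated constants $c_1$ and $n_c$, and to check that the identical chain of inequalities runs for $\cR_c$ with Lemma~\ref{lemma:bound_2} playing the role of Corollary~\ref{corollary:10}.
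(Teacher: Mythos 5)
Your proposal is correct and follows essentially the same route as the paper: the identical decomposition via~\eqref{def:regret_dec_1}--\eqref{def:regret_dec_2}, the same good set $\cG = \cG_1 \cap \cG_2$ with the same four regret pieces (suboptimal episodes on $\cG$ via Corollary~\ref{corollary:10} and Lemma~\ref{lemma:bound_2}, the $|X||U|$ term from $\cG_2^c$, the geometric-series bound $8/(|X|^2|U|)$ on $\cG_1^c$ via Lemma~\ref{lemma:confidence}, and the $K(T)T_p$ steady-state correction), and the same visit-count bookkeeping through $y_{x,u}$ and $y_{x,u}\le c_1 n_c$ to obtain $2|X||U|T_p(c_1 n_c + K(T))$. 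The only cosmetic difference is that you charge each bad episode to a single under-explored pair where the paper sums over all $(x,u)$, which is a slightly tighter accounting reaching the same bound.
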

	\begin{proof}
		The decomposition~\eqref{def:regret_dec_1}-\eqref{def:regret_dec_2} shows that the episodic regrets during $\cE_k$ due to using $\pi_k$ have the following properties: \\
		(a) Episodic regrets for cost and reward can be bounded by $0$ in those episodes in which $\pi_k$ is optimal, i.e., $\pi_k\in \Pi\ust(p)$.\\
		(b) if $\pi_k$ is feasible, but not optimal, then the episodic reward regret is bounded by the length of the episode $|\cE_k |$ (since $r(x,u)\le 1$), while the cost regret is bounded by $0$. \\
		(c) If $\pi_k$ is infeasible, then its cost satisfies $\bar{c}(\pi,p) > \tilde{c}_{ub}$. In this case, the episodic cost regret is bounded by $1$ (since $c(x,u)<1$). Moreover, the reward regret is also bounded by $1$.

Define the ``good set" $\cG := \cG_1 \cap {\cG}_2 $, where $\cG_1$ is as in~\eqref{eq:goodset} and $\cG_2$ is as in Lemma~\ref{lemma:g2}.

We first analyze the regrets on $\cG$. This regret can be further decomposed into the following three types, each of which are bounded separately:\\

\textit{(i) Regret due to suboptimal episodes on $\cG$}: Within such an episode $\cE_k$, a sub-optimal policy is played, i..e, $\pi_k$ is sub-optimal. Note that a policy fron the set $\Pi_F$ is sub-optimal because either it is infeasible, or because its reward is less than $r\ust(p)$. On $\cG$, confidence intervals $\cC(t)$, defined in~\eqref{def:d1}, hold true for all episode starting times $\tau_k$, $k \in [1,K(T)]$, and also (\ref{eq:g2}) holds. 
 Hence, it follows from Corollary~\ref{corollary:10} that any sub-optimal policy with reward less than $r\ust(p)$ will not be played if $n(x,u;\tau_k) >  n_c$ for all $(x,u)$. A similar conclusion holds for any infeasible policy.  In summary, if $n(x,u;\tau_k) >  n_c$ for all $(x,u)$ then the episodic regrets can be taken to be $0$. 

We now upper bound the number of time-steps in such ``sub-optimal'' episodes $\cK_{x,u}$ in which control $u$ is applied in state $x$.  If $n(x,u;\tau_k) >  n_c$ for all $(x,u)$ does not hold, then there exists at least one state, action pair $(x,u)$ with $n(x,u;\tau_k) \le n_c$. Since $n(x,u;T)\le n_c$, we have $n_c \ge  \frac{y_{x,u}}{2}  - \sqrt{y_{x,u}\log T}$. Note that $n_c \ge\kappa_p^2\log T$.  Then there exists $c_1<\frac{11}{\kappa^2_p} $ such that $y_{x,u} \leq c_1n_c$ (Lemma 13, \cite{techreport}).
		So
		$
			\sum_{k \in \cK_{x,u} }  |\cE_k |  \le 2 T_pc_1n_c+2|\cK_{x,u}|T_p
		$.
		Note that $\cK_{x,u}<K(T)$, where $K(T)$ is the total number of episodes till $T$. We let $\cR_{r,1}$ ($\cR_{c,1}$) be the total reward regret until $T$ due to suboptimal episodes on the good set $\cG$. Then 
		\begin{align}\label{ineq:r1}
			\cR_{r,1} &\le \sum_{(x,u)}\sum_{k \in \cK_{x,u} } |\cE_k |\le 2 |X||U|T_p(c_1n_c+K(T)),\\
			\cR_{c,1} &\le \sum_{(x,u)}\sum_{k \in \cK_{x,u} } |\cE_k |\le 2 |X||U|T_p(c_1n_c+K(T)).
		\end{align}
	\\
\textit{(ii) Regret on ${\cG}_2^c$.} Denote these expected regrets by $\cR_{r,2}$ and $\cR_{c,2}$. The probability of the set where the conclusions of (\ref{eq:g2}) do not hold true for a state-action pair $(x,u)$ is upper bounded by $\frac{|X||U|}{T}$. Since the sample-path regret can be trivially upper bounded by $T$, it follows that 
$$
\cR_{r,2},\cR_{c,2}  \le |X||U|.
$$ 
\\\\	
	\textit{(iii) Regret on $\cG_1^c$}. It follows from Lemma~\ref{lemma:confidence} that for any episode $k \in [0,K(T)]$, the probability of failure of the confidence interval $C(\tau_k)$ can be upper bounded by $\frac{2}{|X|^2|U|\tau_k^{2b-1}}$. The expected regrets (reward and cost) in each such episode can be bounded by the length of the episode $\cE_k$. We let $\cR_{r,3}$ ($\cR_{c,3}$) be the total expected reward (cost) regret until $T$ due to the failure of confidence intervals. It can be upper-bounded as:			
\begin{align*}
				\mathcal{R}_{r,3}& \leq \sum_{k=1}^{K(T)} \frac{2(\tau_{k+1}-\tau_{k})}{|X|^2|U|\tau_k^{2b-1}}\le \sum_{k=1}^{\infty} \frac{2}{|X|^2|U|\tau_k^{2b-2}}\left(\frac{\tau_{k+1}}{\tau_k}\right)		\le \sum_{k=1}^\infty \frac{4}{|X|^2|U|\tau_k^{2b-2}} \le \frac{8}{|X|^2|U|},\\
				\mathcal{R}_{c,3}& \leq \sum_{k=1}^{K(T)} \frac{2(\tau_{k+1}-\tau_{k})}{|X|^2|U|\tau_k^{2b-1}}\le \sum_{k=1}^{\infty} \frac{2}{|X|^2|U|\tau_k^{2b-2}}\left(\frac{\tau_{k+1}}{\tau_k}\right)		\le \sum_{k=1}^\infty \frac{4}{|X|^2|U|\tau_k^{2b-2}} \le \frac{8}{|X|^2|U|}.
\end{align*}
\textit{(iv) Additional regret due to not starting in a steady state in each episode}.
\\The RBMLE algorithm implements a stationary policy $\pi_k$ during $\cE_k$. The total expected reward within an episode depends upon the starting state. This means that there is an additional loss if it starts in an unfavorable state. This can be upper-bounded by $T_p$. Letting $\mathcal{R}_{r,4},\cR_{c,4}$ be the total expected regrets due to not starting in a steady state in each episode, we infer that $\cR_{r,4},\cR_{c,4} \le K(T)T_p$ where, $K(T)=\lceil\log_2 T \rceil$ is the total number of episodes till $T$.\\\\
The upper bound on regrets is derived by by adding the bounds on $\cR_{r,1},\cR_{r,2},\cR_{r,3},\cR_{r,4}$ ( or $\cR_{c,1},\cR_{c,2},\cR_{c,3},\cR_{c,4}$ for cost regret).
\end{proof}
\section{Computational Issues}\label{sec:solve_cmdp}
We will see how to efficiently solve the problems~\eqref{eq:rbmle_2} and~\eqref{def:pi_k} so as to obtain $\te_k$ and $\pi_k$. We merge these into a single optimization problem, and repeat it below for convenience of readers:
	\begin{align}
	\max_{\te \in \Theta} \bigg\{ \max_{\pi \in \Pi_{S}}\bigg\{ \alpha(\tau_k) \bar{r}(\theta,\pi) -\sum_{(x,u)} n_k(x,u)KL\left( \hat{p}_{k}(x,u),\te(x,u)  \right): \bar{c}(\te,\pi) \le c_{ub} \bigg\} \bigg\}.\label{rbmle_prob}
\end{align}
We begin by embedding it into a certain ``extended CMDP".

\textbf{Extended CMDP}: The problem~\eqref{rbmle_prob} can be viewed as an \textit{extended CMDP} in which the optimizer\slash controller has to choose the following two quantities at each time $t$: \\
(i) controls $u(t)\in U$, \\
(ii) MDP parameters $\te \in \Theta$.\\
We can merge these two controls, and let $u^{+}(t):= (u(t),\te(t))\in U \times \Theta$ be the controls for this \textit{extended CMDP}. We denote the controlled transition probabilities for this extended MDP by $P^{+}$. These are given as follows:
\ali{
	P^{+}(x,u^{+},y) =  \te(x,u,y),\label{def:extend_ptm}
}
where 
\nal{
u^+ = (u,\te)
}
is control for the extended CMDP. The reward function and cost function for the extended CMDP are the same as for the original CMDP, i.e.
$$
r(x,u^{+}) := r(x,u), c(x,u^+) := c(x,u).
$$
Let $\pi^{+}$ be a policy for this extended CMDP that chooses controls $u^{+}(t)$. Problem~\eqref{rbmle_prob} requires us to solve the following extended CMDP:
\ali{
\max_{\pi^{+}} &\liminf_{T\to\infty}  \frac{1}{T} \bE_{\pi^{+}}\left( \sum_{t=1}^{T} r(x(t),u^{+}(t))\right),\label{ext_cmdp_1}\\
\mbox{ s.t. } &\limsup_{T\to\infty}  \frac{1}{T} \bE_{\pi^{+}}\left( \sum_{t=1}^{T} c(x(t),u^{+}(t))\right)\le c_{ub}.\label{ext_cmdp_2}
}
We let $\cP(U\times \Theta)$ be the set of probabliity measures on the set $U \times \Theta$.  
\begin{definition}[Stationary Randomized Policy for Extended CMDP]
These policies sample the control action $u^{+}(t)$ by using a probability measure from the class $\cP(U\times \Theta)$ that is a function of the current state value $x(t)$. Such a stationary randomized policies policy can be parameterized by measures $\left\{\pi(x): x\in X\right\}$ such that $\pi(x)\in \cP(U\times \Theta)$. Denote the set of these policies by $\Pi^{+}_{S}$.
\end{definition}
Te next result shows that there exists a stationary policy that is optimal for~\eqref{ext_cmdp_1}-\eqref{ext_cmdp_2}. 
\begin{lemma}\label{lemma:stat_opt}
	For the extended CMDP~\eqref{ext_cmdp_1}-\eqref{ext_cmdp_2}, there exists an optimal controller within the class of stationary randomized policies $\Pi^{+}_S$ for the extended CMDP. 
\end{lemma}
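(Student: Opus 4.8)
The plan is to reduce the extended CMDP to a linear program over state--action occupation measures and then recover a stationary policy from the optimal occupation measure, following the classical occupation-measure / linear-programming treatment of average-cost constrained MDPs (Altman). First I would record the regularity of the extended problem. The extended CMDP is a finite-state MDP whose action set $A:=U\times\Theta$ is compact: $U$ is finite and $\Theta$ in~\eqref{def:Theta} is a closed and bounded subset of the finite-dimensional cube $[0,1]^{|X|\times|X|\times|U|}$, hence compact. The reward and cost $r(x,u^{+})=r(x,u)$ and $c(x,u^{+})=c(x,u)$ are bounded and continuous in $u^{+}$ (they are constant in the $\te$-component, and $U$ is discrete), and the controlled kernel $P^{+}(x,u^{+},y)=\te(x,u,y)$ in~\eqref{def:extend_ptm} is continuous in the $\te$-component of $u^{+}$. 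These are exactly the continuity and compactness hypotheses under which the average-cost CMDP is well posed.

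Second, I would verify that the extended CMDP inherits the unichain property. By Assumption~\ref{assum:1} every $\te\in\Theta$ shares the support pattern of the true kernel $p$, i.e.\ $\te(x,y,u)=0$ precisely when $p(x,y,u)=0$, so any stationary randomized policy $\pi^{+}\in\Pi^{+}_{S}$ induces a Markov chain on $X$ whose reachability graph is contained in that generated by $p$. Since the original MDP is unichain under every stationary policy, so is the extended one, and its hitting times are bounded by the mixing time $T_{p}$ of Definition~\ref{def:cond}. Consequently each $\pi^{+}\in\Pi^{+}_{S}$ admits a unique invariant distribution, and the long-run averages in~\eqref{ext_cmdp_1}--\eqref{ext_cmdp_2} are genuine limits independent of the initial state.

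Third, I would pass to occupation measures. For $\pi^{+}\in\Pi^{+}_{S}$ let $\rho_{\pi^{+}}\in\cP(X\times A)$ be the induced stationary state--action measure; by the unichain property it is the unique solution of the balance (flow-conservation) constraints, and the objective and constraint reduce to the linear functionals $\bar{r}(\pi^{+})=\int r\,d\rho_{\pi^{+}}$ and $\bar{c}(\pi^{+})=\int c\,d\rho_{\pi^{+}}$. The set of probability measures on the compact metric space $X\times A$ satisfying the balance constraints is convex and, by compactness of $A$ together with the continuity of $P^{+}$, weak-$\ast$ compact. Problem~\eqref{ext_cmdp_1}--\eqref{ext_cmdp_2} is therefore the maximization of the weak-$\ast$ continuous linear functional $\rho\mapsto\int r\,d\rho$ over this compact convex set intersected with the single half-space $\{\rho:\int c\,d\rho\le c_{ub}\}$, so, assuming feasibility, the supremum is attained at some $\rho\ust$.

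Finally, I would recover a policy by disintegrating $\rho\ust$ with respect to its $X$-marginal $\mu$: since $A$ is Polish there exist regular conditional measures $\pi\ust(x):=\rho\ust(\cdot\mid x)\in\cP(U\times\Theta)$, i.e.\ an element of $\Pi^{+}_{S}$. Because $\rho\ust$ obeys the balance equations, $\mu$ is invariant for the chain induced by $\pi\ust$, and by the unichain uniqueness it is the unique invariant distribution, so the occupation measure of $\pi\ust$ is exactly $\rho\ust$; hence $\pi\ust$ attains the optimal average reward while respecting the cost constraint. I expect the main obstacle to be the third step: showing that the balance constraints \emph{exactly} characterize the achievable occupation measures (so that the linear-program value coincides with the CMDP value) and that the feasible set is weak-$\ast$ compact in the presence of the continuous action component $\Theta$. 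This is precisely where the unichain assumption and the compactness of $\Theta$ are indispensable, and it is the point at which I would invoke the standard existence theorem for stationary optimal policies in average-cost CMDPs.
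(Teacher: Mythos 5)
The paper offers no proof of Lemma~\ref{lemma:stat_opt} at all: it is stated bare, with the surrounding text deferring to the standard occupation-measure treatment of average-cost CMDPs (the infinite-dimensional LP of \cite{hernandez2012further}, and the finite LP \eqref{overallobj_appx}--\eqref{constall_appx} that follows). Your proposal fleshes out precisely that intended route --- compact action set $U\times\Theta$, continuity of $P^{+}$ in the $\te$-component, weak-$\ast$ compactness of the balance-feasible measures, attainment of the linear objective, and disintegration of $\rho\ust$ into a policy in $\Pi^{+}_{S}$ --- so in approach you coincide with the paper. Your steps 1, 3 and 4 are sound (strict feasibility, needed for the feasible set to be nonempty, is supplied by Assumption~\ref{assum:strict}).

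There is, however, a genuine gap in your second step. The set $\Theta$ in \eqref{def:Theta} only requires $\te(x,y,u)=0$ \emph{whenever} $p(x,y,u)=0$; it does not require $\te(x,y,u)>0$ where $p(x,y,u)>0$, and it imposes no lower bound on the nonzero entries of $\te$ (the bound $p_{\min}$ in \eqref{def:p_min} constrains $p$, not the elements of $\Theta$). So your claim that every $\te\in\Theta$ ``shares the support pattern'' of $p$ is false: a $\te\in\Theta$ may delete transitions that $p$ uses, and deleting edges can split the unique recurrent class --- e.g.\ if $p$ supports self-loops and cross-transitions between two states, the $\te$ that zeroes the cross-transitions makes both states absorbing, so the extended MDP is \emph{not} unichain even though $p$ is. Likewise the hitting times under $\te\neq p$ are not bounded by $T_p$: Definition~\ref{def:cond} takes a maximum over policies for the \emph{fixed} kernel $p$, and as nonzero entries of $\te$ tend to $0$ the hitting times blow up. This failure lands exactly on the obstacle you yourself flagged: without a unichain property uniform over $\Theta$, the balance equations no longer characterize the long-run averages achievable from a given initial state, an optimal $\rho\ust$ may concentrate on a recurrent class that the chain, started elsewhere, never enters, and the disintegrated policy need not attain $\int r\,d\rho\ust$. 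The minimal repair, consistent with the paper's standing assumptions, is to build the known lower bound $p_{\min}$ of Assumption~\ref{assum:1} into $\Theta$ (i.e.\ require $\te(x,y,u)\ge p_{\min}$ wherever $p(x,y,u)>0$), which restores support equality, preserves compactness, and gives hitting times bounded uniformly over the compact $\Theta$ by a continuity argument; alternatively one must invoke the multichain LP theory and handle recurrent-class selection explicitly.
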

Note that a measure from the set $\cP(U\times \Theta)$ is an infinite dimensional object. Hence, optimizing amongst the class of policies $\Pi^{+}_{S}$ in order to solve~\eqref{ext_cmdp_1}-\eqref{ext_cmdp_2} requires us to solve an infinite dimensional LP~\cite{hernandez2012further}. Since this optimization problem would be computationally infeasible, we next propose to approximate it by an optimization problem that involves finitely many decision variables.

We begin by constructing a finite set $\Theta_{\epsilon}$ that is an ``$\epsilon$-approximation" to the set $\Theta$. It has the following property: for each $\te \in \Theta$, there exists at least one point $\sigma_{\te}\in \Theta_{\epsilon}$ such that $\|\te-\sigma_{\te}\|\le \epsilon$. We let $\Pi^{+}_{S,\epsilon}$ denote the set of those stationary policies for the extended CMDP~\eqref{ext_cmdp_1}-\eqref{ext_cmdp_2} that restrict to the set $\Theta_{\epsilon}$ while choosing $\te$ (instead of choosing from the larger set $\Theta$). The following result shows how to approximately solve the original extended CMDP~\eqref{ext_cmdp_1}-\eqref{ext_cmdp_2} by restricting to the class $\Pi^{+}_{S,\epsilon}$. It also derives an upper-bound on the corresponding  ``truncation error".
\begin{theorem}
Consider the following CMDP
\ali{
	\max_{\pi^{+}\in \Pi^{+}_{S,\eps}  } &\liminf_{T\to\infty}  \frac{1}{T} \bE_{\pi^{+}}\left( \sum_{t=1}^{T} r(x(t),u^{+}(t))\right),\label{ext_cmdp_1_eps}\\
	\mbox{ s.t. } &\limsup_{T\to\infty}  \frac{1}{T} \bE_{\pi^{+}}\left( \sum_{t=1}^{T} c(x(t),u^{+}(t))\right)\le c_{ub},\label{ext_cmdp_2_eps}
}
that can be considered as an approximation to the ``true CMDP"~\eqref{ext_cmdp_1}-\eqref{ext_cmdp_2}. The optimal value of~\eqref{ext_cmdp_1_eps}-\eqref{ext_cmdp_2_eps} is within $\cdots$ of the optimal value of~\eqref{ext_cmdp_1}-\eqref{ext_cmdp_2}.
\end{theorem}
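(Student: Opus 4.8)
The plan is to mimic the discretization argument used for Lemma~1, now applied to the $\te$-component of the extended control rather than to the action simplex. Write $V\ust$ for the optimal value of the full extended CMDP~\eqref{ext_cmdp_1}-\eqref{ext_cmdp_2} and $V_\eps\ust$ for that of the restricted problem~\eqref{ext_cmdp_1_eps}-\eqref{ext_cmdp_2_eps}. Since $\Pi^{+}_{S,\eps}\subseteq \Pi^{+}_{S}$, we trivially have $V_\eps\ust \le V\ust$, so only a matching lower bound $V_\eps\ust \ge V\ust - O(\eps)$ must be produced. By Lemma~\ref{lemma:stat_opt} an optimal policy for~\eqref{ext_cmdp_1}-\eqref{ext_cmdp_2} may be taken stationary randomized, i.e.\ a collection of measures $\{\pi\ust(x)\in \cP(U\times\Theta)\}$.

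First I would dispose of feasibility by tightening. Consider the full extended CMDP with the cost bound replaced by $c_{ub}-\eps\kappa_p|X|^2$; by Theorem~\ref{th:perturbation_cmdp} its optimal value is at least $V\ust - \eps\kappa_p|X|^2\,\frac{\hat{\eta}}{\eta}$, and I denote an optimal stationary policy for it by $\tilde{\pi}^{+}$. Next I round the parameter choice: define $\hat{\pi}^{+}\in\Pi^{+}_{S,\eps}$ as the push-forward of each measure $\tilde{\pi}^{+}(x)$ under the map $(u,\te)\mapsto (u,\sigma_\te)$, where $\sigma_\te\in\Theta_\eps$ is the net point guaranteed by $\|\te-\sigma_\te\|\le\eps$. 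Because the reward and cost functions do not depend on $\te$, this rounding leaves the $u$-marginal of the policy, hence the per-step reward and cost laws, unchanged, and only perturbs the chosen transition parameter by at most $\eps$.

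The key step is to control the effect of rounding on the long-run averages. The effective transition kernel under $\hat{\pi}^{+}$ is $\hat{P}(x,y)=\int_{U\times\Theta} \sigma_\te(x,u,y)\,d\tilde{\pi}^{+}(x)$, which differs entrywise from the kernel $\tilde{P}$ of $\tilde{\pi}^{+}$ by at most $\eps$, since $|\sigma_\te(x,u,y)-\te(x,u,y)|\le\eps$ pointwise. Applying the stationary-distribution perturbation bound of Lemma~\ref{lemma:cho_meyer} exactly as in the derivation of~\eqref{ineq:lemma1_1}, and using $r,c\in[0,1]$, yields $|\bar{r}(\hat{\pi}^{+})-\bar{r}(\tilde{\pi}^{+})|\le \eps\kappa_p|X|^2$ and $|\bar{c}(\hat{\pi}^{+})-\bar{c}(\tilde{\pi}^{+})|\le \eps\kappa_p|X|^2$. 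Consequently $\bar{c}(\hat{\pi}^{+})\le (c_{ub}-\eps\kappa_p|X|^2)+\eps\kappa_p|X|^2=c_{ub}$, so $\hat{\pi}^{+}$ is feasible for~\eqref{ext_cmdp_1_eps}-\eqref{ext_cmdp_2_eps}, while its reward obeys $\bar{r}(\hat{\pi}^{+})\ge V\ust-\eps\kappa_p|X|^2\frac{\hat{\eta}}{\eta}-\eps\kappa_p|X|^2$. Since $V_\eps\ust\ge \bar{r}(\hat{\pi}^{+})$, this gives the truncation bound $V\ust-V_\eps\ust \le \eps\kappa_p|X|^2\big(1+\frac{\hat{\eta}}{\eta}\big)=O(\eps)$, which is the quantity to be inserted in place of ``$\cdots$''.

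The main obstacle is justifying that Lemma~\ref{lemma:cho_meyer} applies with a single constant across the whole family of kernels the extended controller can realize. Unlike Lemma~1, where the competing kernels all sit near $p$, here $\tilde{\pi}^{+}$ may mix parameters $\te$ lying anywhere in $\Theta$, so $\tilde{P}$ need not be close to $p$. I would resolve this by noting that every $\te\in\Theta$ shares the support of $p$ and is bounded below by $p_{\min}$ on that support (Assumption~\ref{assum:1}), so the induced chains are uniformly unichain with conductivity bounded over the compact set $\Theta$; replacing $\kappa_p$ by $\sup_{\te\in\Theta}\kappa_\te$ (finite by this argument, and the same constant implicitly used in Lemma~\ref{lemma:bound_2}) validates the perturbation step for every realizable kernel. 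A secondary, immediate check is that the push-forward measure $\hat{\pi}^{+}(x)$ is genuinely supported on $U\times\Theta_\eps$ and therefore lies in the admissible class for~\eqref{ext_cmdp_1_eps}-\eqref{ext_cmdp_2_eps}.
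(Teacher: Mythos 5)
Your proposal follows essentially the same route as the paper's own proof: tighten the cost budget, invoke Theorem~\ref{th:perturbation_cmdp} on the tightened problem, round its optimal stationary policy onto $U\times\Theta_{\eps}$ via $(u,\te)\mapsto(u,\sigma_{\te})$, and control the resulting reward and cost shifts through Lemma~\ref{lemma:cho_meyer} to conclude feasibility and an $O(\eps)$ truncation bound. If anything your bookkeeping is slightly more careful than the paper's --- you tighten by $\eps\kappa_p|X|^2$ so that the feasibility arithmetic closes exactly, whereas the paper tightens by $\eps$ and absorbs a $\kappa_{\pi\ust}\eps$ perturbation with an implicit assumption on the constant, and you explicitly flag the uniform-conductivity issue (applying the perturbation lemma to kernels far from $p$) that the paper passes over silently.
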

\begin{proof}
	Consider a CMDP that is obtained by modifying the CMDP~\eqref{ext_cmdp_1}-\eqref{ext_cmdp_2} as follows: the upper bound on cost expenditure is replaced with $c_{ub}-\epsilon$. It follows from sensitivity analysis of CMDPs Theorem~\ref{th:perturbation_cmdp} that its optimal reward is greater than 
	$$
	r_1 := r\ust -  \eps \frac{\hat{\eta}}{\eta},
	$$ 
	(where $r\ust$ is optimal value of \eqref{ext_cmdp_1}-\eqref{ext_cmdp_2}). Let $\pi_1$ solve this modified CMDP. $\pi_1$ chooses action according to the measure $\pi\ust(x)\in \cP(U \times \Theta)$ when state is $x$. We approximate $\pi_1$ as follows, so that now it places probability mass on only the discrete set $U \times \Theta_{\eps}$. Whenever $\pi_1$ suggests an action $(u,\te)$, then the discretized policy maps it to $(u,\sigma_{\te})$, i.e., the nearest point in the set $U \times \Theta_{\eps}$. Since we have that $\|\sigma_{\te} -\te\|\le \epsilon$, it follows from Lemma~\ref{lemma:cho_meyer} that the average reward of this modified policy (say $\pi_{1,\eps}$) is greater than 
	$$
	r_1-  \kappa_{\pi\ust} \epsilon = r\ust -  \eps \frac{\hat{\eta}}{\eta}-\kappa_{\pi\ust} \epsilon,
	$$
	and average cost is lesser than $c_{ub}-\epsilon + \kappa_{\pi\ust} \epsilon\le c_{ub}$. Since $\pi_{1,\eps}\in \Pi^{+}_{S,\eps}$, we deduce that the optimal value of~\eqref{ext_cmdp_1_eps}-\eqref{ext_cmdp_2_eps} is greater than or equal to $r\ust -  \eps \frac{\hat{\eta}}{\eta}-\kappa_{\pi\ust} \epsilon$.
	
\end{proof}
We now discuss how to obtain $\pi_k$, the policy that is to implemented within the $k$-th episode. Consider the following linear program\footnote{It is supposed to yield a solution to the CMDP~\eqref{ext_cmdp_1_eps}-\eqref{ext_cmdp_2_eps}.} in which the decision variables are given by $\{ \mu(x,u,\te): x \in X, u \in U,\te \in \Theta_{\eps} \}$:
\begin{align}
	&\max \sum_{x \in \cX} \sum_{u \in \cU} \sum_{\te \in  \Theta_{\eps}} \mu(x,u,\te)r(x,u),\label{overallobj_appx}
	&\intertext{subject to }\notag\\
	& \sum_{(u,\te)} \mu(x,u,\te) = \sum_{(y,v,\te)} \mu(y,v,\te) P^{+}(y, (v,\te),x), \label{const1n_appx} \\
	& \mu(x,u,\te) \geq 0, \\
	&\sum_{x \in \cX}\sum_{u} \sum_{\te \in  \Theta_{\eps}}\mu(x,u,\te) = 1,  \label{const2n_appx} \\
	& \sum_{(x,u,\te)} \mu(x,u,\te) c(x,u) \leq U_B . \label{constall_appx}
\end{align} 
Let $\mu\ust$ be a solution. Define $\pi_k$ to be the stationary randomized policy that randomizes amongst the actions using the following distribution when state is equal to $x$: 
\begin{align}
	\frac{  \sum\limits_{\te \in  \widetilde{\Theta}_{\epsilon}}    \mu\ust(x,u,\te)}{\sum_{(v,\te)} \mu\ust(x,v,\te)}.
\end{align}
(In case the denominator above is zero, choose any $\pi\ust_n(x_n,u_n,\te_n) \geq 0$ satisfying
$\sum_{v_n,\te_n} \pi\ust_n(x_n,v_n,\te_n) = 1$).

\begin{assumption}\label{assum:strict}
	The CMDP is strictly feasible, i.e.	
	there exists a policy $\pi_{feas.} \in \Pi^{+}$ and a constant $\epsilon>0$ such that the average cost under $\pi_{feas.}$ is strictly below $c_{ub} - \epsilon$. We denote
	\ali{
		\eta := c_{ub} - \epsilon - \bar{c}(\pi_{feas.}). \label{def:eta}
	}
\end{assumption}

\section{Proof of Lemma \ref{lemma:confidence}}\label{proof:confidence}
Consider the scenario where the number of visits to $(x,u)$ is fixed at $n_{x,u}$, and let $\hat{p}(x,y,u)$ be the resulting estimates. Consider the event $\left\{|p(x,y,u)-\hat{p}(x,y,u) | > r\right\}$, where $x,y \in X, u \in U$ and $r>0$. It follows from the Azuma-Hoeffding's inequality \citep{mitzenmacher2017probability} that the probability of this event is upper bounded by $2\exp(-2n_{x,u}r^2)$. Therefore, 
\begin{align*}
	\bP\left(|p(x,y,u)-\hat{p}(x,y,u) | > \sqrt{\frac{ \log\left(t^b |X|^{2} |U|\right) }{n_{x,u}}   }     \right) \leq 2\left(\frac{1}{t^b|X|^{2}|U|}\right)^2.
\end{align*} 
Utilizing union bound on the number of plays of action $u$ in state $x$ until time $t$ and considering all possible state-action-state pairs, we get
\begin{align*}
	\bP(p \notin \cC(t))\le \frac{2}{|X|^2|U|t^{2b-1}}\;\forall\; t \in [1,T] .
\end{align*}
\section{Proof of Lemma \ref{lemma:suff_prec_optimal}}\label{proof:suff_prec_optimal}
\begin{proof} 
	Consider an optimal policy $\pi\ust\in \Pi\ust(p)$.	Recall that its RBMLE index (\ref{eq:index}) is given as follows
	\begin{align}   
		I_k(\pi\ust)=\max_{\te \in \Theta: \bar{c}(\pi\ust,\te)\le c_{ub}  }\bigg\{\alpha(\tau_k) \bar{r}(\theta,\pi\ust) -\sum_{(x,u)} n_k(x,u)KL\left( \hat{p}_{k}(x,u),\te(x,u)  \right) \bigg\}.
	\end{align} 
	Since the original CMDP is feasible, i.e. $\bar{c}(\pi\ust,p)\le c_{ub} $, we evaluate the expression within the braces in the above for $\te = p$ to get (let $R\ust(p)$ be the optimal value of the true CMDP)
	\begin{align*}
		I_k(\pi \ust) 	&\ge \bigg\{  \alpha(\tau_k)\bar{r}(\pi\ust,p) -\sum_{(x,u)} n_k(x,u)KL\left( \hat{p}_{k}(x,u),p(x,u)  \right) \bigg\}\\ 
		&= \bigg\{  \alpha(\tau_k)R\ust(p) -\sum_{(x,u)} n_k(x,u)KL\left( \hat{p}_{k}(x,u),p(x,u)  \right) \bigg\}\\
		&\ge 
		\bigg\{  \alpha(\tau_k)R\ust(p) -\sum_{(x,u)} n_k(x,u)\frac{\big(\sum\limits_{y\in X} |p(x,y,u)-\hat{p}_{k}(x,y,u)|\big)^2}{2p_{\min}} \bigg\},
	\end{align*}
	where the first inequality follows since $\te_{k,\pi\ust}$~\eqref{eq:index_theta}  maximizes the objective in~\eqref{eq:index}, 
	while the second inequality follows from the inverse Pinkser's inequality \citep{cover1999elements} and Assumption~\ref{assum:1}.
	Since on $\cG_1$, we have that $|p(x,y,u)-\hat{p}_{k}(x,y,u)|<d_1(x,u;t)$  for all $(x,y,u) \in X \times X \times U$, upon substituting this into the above inequality we obtain the following
	\begin{align*}
		I_k(\pi \ust)\ge 	\bigg\{  \alpha(\tau_k)R\ust(p) - \frac{|X|^2|U|}{2p_{\min}}\log\left(t^b |X|^{2} |U|\right) \bigg\}
		= \alpha(\tau_k)R\ust(p)\left(1-\frac{|X|^{2}|U|}{2a p_{\min} R \ust(p)}\right).
	\end{align*}
\end{proof}

\section{Proof of Corollary~\ref{corollary:10}}\label{proof:coro_10}
\begin{proof}
	The bound~\eqref{ineq:bound_index} follows from the bound~\eqref{ineq:2} derived in Lemma~\ref{lemma:pinkser}, and Lemma~\ref{lemma:cho_meyer}.~To prove~\eqref{ineq:index}, we compare the upper-bound~\eqref{upper_bound_idx} on the index of $\pi$, with the lower-bound on index of an optimal policy that was derived in Lemma~\ref{lemma:suff_prec_optimal}. We see that $I_k(\pi)$ is less than $I_k(\pi\ust)$ if the following holds true,
	\nal{
		\alpha(\tau_k)R\ust(p)\left(1-\frac{|X|^{2}|U|}{2a p_{\min} R \ust(p)}\right) > \alpha(T) \left[ \bar{r}(\pi,p) + \beta \Delta_{\min} \right]
	}
	or
	\ali{
		R\ust(p) -\bar{r}(\pi,p) -\frac{|X|^{2}|U|}{2a p_{\min}}> \beta \Delta_{\min}.
	}
	Since $R\ust(p) -\bar{r}(\pi,p) \ge \Delta_{\min}$, the above condition is satisfied if $a$ satisfies~\eqref{ineq:a_cond}.
\end{proof}

\section{Auxiliary Results}
The following results are from \citep{cho_00} and \citep{auer_07} respectively.
\begin{lemma}\citep{cho_00}\label{lemma:cho_meyer} 
	Consider a stationary policy $\pi$ and $\te$ be an MDP parameter that satisfies
	\begin{align}\label{def:suff_visits}
		|\te(x,y,\pi(x))-p(x,y,\pi(x))|< \frac{\epsilon}{\kappa_p |X|^{2}}, \; \forall \; x,y \in X,
	\end{align}
	where $\epsilon>0$ and $\kappa_p$ is the conductivity. We then have that 
	\begin{align*}
		| \bar{r}(\te,\pi) - \bar{r}(p,\pi) | < \epsilon.
	\end{align*}
Similarly for costs.
\end{lemma}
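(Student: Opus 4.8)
The plan is to reduce the statement, which is a perturbation (sensitivity) bound on the average reward of a unichain, to a perturbation bound on the stationary distribution of the Markov chain that $\pi$ induces, and then to control that perturbation through the conductivity $\kappa_p$. First I would fix $\pi$ and pass to the induced chains on $X$: let $P(x,y)=\sum_u\pi(x,u)p(x,y,u)$ and $\tilde P(x,y)=\sum_u\pi(x,u)\te(x,y,u)$, with reward vector $r(x)=\sum_u\pi(x,u)r(x,u)\in[0,1]$. Setting $\delta:=\eps/(\kappa_p|X|^2)$, the hypothesis $|\te(x,y,\pi(x))-p(x,y,\pi(x))|<\delta$ gives the entrywise bound $|E(x,y)|<\delta$ for $E:=\tilde P-P$, and $E$ has zero row sums since both chains are stochastic. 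Writing $\mu,\tilde\mu$ for the stationary distributions (unique by the unichain assumption, which also makes all $\bE[\tau^\pi_{x,y}]$ finite), we have $\bar r(p,\pi)=\mu^\top r$ and $\bar r(\te,\pi)=\tilde\mu^\top r$, so it suffices to bound $|(\tilde\mu-\mu)^\top r|$.

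Second, I would invoke the exact stationary-perturbation identity $(\tilde\mu-\mu)^\top=\tilde\mu^\top E Z$, where $Z:=(I-P+\mathbf{1}\mu^\top)^{-1}$ is the Kemeny--Snell fundamental matrix of the unperturbed chain; this follows at once from $\tilde\mu^\top\tilde P=\tilde\mu^\top$, $\mu^\top Z=\mu^\top$, and $(I-P)Z=I-\mathbf{1}\mu^\top$. Using the Kemeny--Snell identity $Z_{kj}-Z_{ij}=\mu_j\big(m^h_{ij}-m^h_{kj}\big)$, with $m^h_{kj}:=\bE[\tau^\pi_{k,j}]$ the mean hitting time and the convention $m^h_{jj}=0$, together with $\sum_k E_{ik}=0$, a one-line computation collapses the $i$-th row of $EZ$ to $(EZ)_{ij}=-\mu_j\sum_k E_{ik}m^h_{kj}$, whence $(\tilde\mu-\mu)_j=-\mu_j\sum_i\tilde\mu_i\sum_k E_{ik}m^h_{kj}$. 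Note that the $k=j$ term drops out because $m^h_{jj}=0$, so only genuine hitting times $m^h_{kj}$ with $k\neq j$ survive.

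Third, and this is the crux, I would use the conductivity to cancel the troublesome $1/\mu_j$ scale of the hitting times. Since $\kappa_p$ is defined as a maximum over policies, the bound $m^h_{kj}\le 2\kappa_p\,\bE[\tau^\pi_{j,j}]=2\kappa_p/\mu_j$ holds for the chain induced by $\pi$. Substituting this, using $|E_{ik}|<\delta$ across the at most $|X|$ columns, and $\sum_i\tilde\mu_i=1$, the prefactor $\mu_j$ cancels exactly against the $1/\mu_j$ from the hitting-time bound, giving $|(\tilde\mu-\mu)_j|\le 2\kappa_p|X|\delta$ uniformly in $j$, and therefore $\|\tilde\mu-\mu\|_1\le 2\kappa_p|X|^2\delta$. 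Finally, because $\sum_j(\tilde\mu_j-\mu_j)=0$, I would center the reward at $\tfrac12$ to absorb the factor $2$: $|\bar r(\te,\pi)-\bar r(p,\pi)|=\big|\sum_j(\tilde\mu_j-\mu_j)(r_j-\tfrac12)\big|\le\tfrac12\|\tilde\mu-\mu\|_1\le\kappa_p|X|^2\delta=\eps$, and the cost version is identical with $c$ in place of $r$.

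The main obstacle is precisely this third step: matching the target factor $\kappa_p|X|^2$ rather than an uncontrolled $1/\mu_{\min}$ hinges on the cancellation of the stationary-probability weights, which is the structural reason the conductivity is defined as a hitting-time-to-return-time ratio, so that $\mu_j\,m^h_{kj}\le 2\kappa_p$. An alternative to deriving the stationary-distribution bound from scratch would be to quote it directly from the perturbation bounds of \citep{cho_00}, but I would still need the conductivity identity above to obtain the stated constant.
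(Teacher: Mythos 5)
The paper offers no proof of this lemma: it appears in the Auxiliary Results section as a quoted result of \citep{cho_00}, so your proposal is being measured against a bare citation rather than an in-paper argument. What you have written is, in substance, a correct reconstruction of the Cho--Meyer perturbation bound itself, and every step checks out: the reduction to the induced chains with $E=\tilde P-P$ having zero row sums and entries below $\delta=\eps/(\kappa_p|X|^2)$; the exact identity $(\tilde\mu-\mu)^\top=\tilde\mu^\top EZ$ (which follows from $\tilde\mu^\top E=\tilde\mu^\top(I-P)$ and $(I-P)Z=I-\mathbf{1}\mu^\top$, with $Z$ invertible because the unichain assumption makes the eigenvalue $1$ of $P$ simple); the Kemeny--Snell identity $Z_{jj}-Z_{kj}=\mu_j m^h_{kj}$ collapsing the row sum to $(\tilde\mu-\mu)_j=-\mu_j\sum_i\tilde\mu_i\sum_k E_{ik}m^h_{kj}$; and the final centering of $r$ at $\tfrac12$, which recovers the factor $2$ that Cho--Meyer instead obtain through the sharper $\tfrac12\|E\|_\infty$ prefactor. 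Your third step correctly identifies the structural point: the lemma's constant works only because $\mu_j\max_{k\neq j}m^h_{kj}$ is the relevant condition number, so the $\mu_j$ prefactor cancels against the hitting times and no $1/\mu_{\min}$ appears.

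One caveat you should make explicit. Your crux inequality $m^h_{kj}\le 2\kappa_p\,\bE[\tau^\pi_{j,j}]$ reads the conductivity as $\max_j\max_{k\neq j}\bE[\tau^\pi_{k,j}]\big/\big(2\,\bE[\tau^\pi_{j,j}]\big)$, i.e.\ normalized by the return time of the \emph{target} state. The paper's Definition~\ref{def:cond} literally reads $\max_x\max_{y\neq x}\bE[\tau^\pi_{x,y}]\big/\big(2\,\bE[\tau^\pi_{x,x}]\big)$, normalized by the return time of the \emph{source}. Mean first-passage times are not symmetric, so these maxima differ in general; under the literal definition you would only get $\mu_j m^h_{kj}\le 2\kappa_p\,\mu_j/\mu_k$, the cancellation fails, and your bound does not follow. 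The target-normalized quantity is precisely the Cho--Meyer condition number $\tfrac12\mu_j\max_{k\neq j}m^h_{kj}$, and it is what makes the lemma true as stated, so the paper's definition very likely has its indices transposed --- but since you are proving the lemma, you should flag that you are using that corrected reading rather than silently swapping the normalization. A second, minor point: you invoke uniqueness of the perturbed stationary distribution $\tilde\mu$, but the set $\Theta$ only forces the support of $\te$ to be contained in that of $p$, so unichain-ness of the perturbed chain is an implicit standing assumption (Cho--Meyer assume both chains irreducible); one sentence acknowledging this would close the argument.
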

\begin{lemma}\citep{auer_07}\label{lemma:azum_visits}
	Let $\cK_{x,u}$ denote the indices of those episodes up to time $T$ in which action $u$ is taken when state is equal to $x$. Then
	\begin{align}\label{eq:azum_visits}
		\bP\left(  n(x,u;T) \ge  \frac{y_{x,u}}{2}  - \sqrt{y_{x,u}\log T} ~ \forall~{x,u} \right) \ge 1 - \frac{|X||U|}{T},
	\end{align}
	for all state-action pairs $(x,u)$, where
	\begin{align*}
		y_{x,u} := \sum_{k \in \cK_{x,u} } \Bigl\lfloor \frac{  |\cE_k| }{2 T_p} \Bigr\rfloor .
	\end{align*}
\end{lemma}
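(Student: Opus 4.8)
The plan is to lower-bound $n(x,u;T)$ by a sum of ``block-success'' indicators whose conditional means are bounded below, and then apply a bounded-difference martingale concentration inequality followed by a union bound over the $|X||U|$ state-action pairs. The starting observation is that within any episode $k\in\cK_{x,u}$ the algorithm plays a single stationary policy under which $u$ is the action taken in state $x$, so every visit to $x$ during such an episode is automatically a visit to $(x,u)$. Hence it suffices to lower-bound the number of times the state $x$ is reached during the episodes in $\cK_{x,u}$.

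First I would cut each episode $k\in\cK_{x,u}$ into $\lfloor |\cE_k|/(2T_p)\rfloor$ consecutive, disjoint blocks, each of length $2T_p$ (discarding the leftover shorter tail, which is exactly why the floor appears in the definition of $y_{x,u}$). Concatenating the blocks over all $k\in\cK_{x,u}$ produces a single sequence of $N:=y_{x,u}$ blocks. Let $\cF_{i-1}$ be the history up to the start of block $i$, and let $Z_i$ be the indicator that state $x$ is visited at least once inside block $i$. Conditioned on $\cF_{i-1}$, the policy in force throughout block $i$ is stationary, so the expected time to hit $x$ from the block's initial state is at most $T_p$ by Definition~\ref{def:cond}; Markov's inequality then gives $\bP(\text{$x$ not reached within } 2T_p \text{ steps}\mid \cF_{i-1})\le T_p/(2T_p)=1/2$, i.e. $\bE[Z_i\mid\cF_{i-1}]\ge 1/2$. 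Since the blocks are disjoint and each successful block contributes at least one visit to $(x,u)$, we obtain the pathwise bound $n(x,u;T)\ge \sum_{i=1}^{N} Z_i$.

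Next I would concentrate $\sum_{i=1}^N Z_i$. Writing $M_j := \sum_{i\le j}\big(Z_i - \bE[Z_i\mid\cF_{i-1}]\big)$, the sequence $(M_j)$ is a martingale whose increments are confined to intervals of length one, and $\sum_{i=1}^N Z_i = M_N + \sum_{i=1}^N \bE[Z_i\mid\cF_{i-1}] \ge M_N + N/2$. The Azuma--Hoeffding inequality in its bounded-range form gives $\bP(M_N \le -\sqrt{N\log T}) \le \exp(-2\log T)=T^{-2}$, so for each fixed $(x,u)$ we get $\bP\!\big(n(x,u;T) < y_{x,u}/2 - \sqrt{y_{x,u}\log T}\big)\le T^{-2}$. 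A union bound over all $|X||U|$ state-action pairs then bounds the total failure probability by $|X||U|\,T^{-2}\le |X||U|/T$, which is exactly~\eqref{eq:azum_visits}.

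The main obstacle is the conditioning structure rather than any single estimate: I must set up the filtration so that the hitting-time bound $T_p$, and hence the lower bound $\bE[Z_i\mid\cF_{i-1}]\ge 1/2$, holds block-by-block despite the fact that the initial state of each block is random and that the episodes (and therefore the policies) change across $\cK_{x,u}$. This is precisely why I avoid claiming independence of the $Z_i$ and instead work with the martingale $(M_j)$, for which only the conditional-mean bound is needed. Two minor points to verify are that the floor correctly discards partial blocks, so that the $N$ complete blocks genuinely have length $2T_p$ and the Markov step applies, and that the bounded-range version of Azuma--Hoeffding (increments in a unit-length interval) is used, so that the exponent carries the factor $2$ needed to reach $T^{-2}$ per pair.
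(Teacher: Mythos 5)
The paper itself contains no proof of this lemma: it is imported verbatim from \citep{auer_07} (cf.\ the remark after Lemma~\ref{lemma:g2}, ``Follows from (Lemma 11, \cite{techreport})''). Your reconstruction is the standard argument behind the cited result: cut each episode in $\cK_{x,u}$ into $\lfloor |\cE_k|/(2T_p)\rfloor$ disjoint blocks of length $2T_p$, use $\bE[\tau^{\pi}_{s,x}]\le T_p$ (Definition~\ref{def:cond}) plus Markov's inequality to get $\bE[Z_i\mid \cF_{i-1}]\ge 1/2$, and concentrate via Azuma--Hoeffding for the bounded-increment martingale rather than claiming independence. Those steps, including the unit-range form of Azuma and the role of the floor, are correct.

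There is one genuine gap: you apply Azuma ``at index $N=y_{x,u}$'' as if $N$ were deterministic, but $y_{x,u}$ is random --- $\cK_{x,u}$ depends on the realized trajectory, since $\pi_k$ is computed from the data at $\tau_k$. A martingale deviation bound at a random index does not follow directly from the fixed-horizon inequality. The standard repair is either a union bound over the at most $T$ possible values $n$ of $y_{x,u}$ (applying Azuma to the optionally-skipped martingale at each fixed $n$, which is legitimate because membership of episode $k$ in $\cK_{x,u}$ is decided at the episode's start and hence is predictable --- a point worth stating explicitly), or a maximal-inequality/stopping argument. Note that the union over values costs a factor $T$, turning your per-pair failure probability $T^{-2}$ into $T^{-1}$; this is exactly why the lemma asserts $1-|X||U|/T$ rather than the $1-|X||U|/T^{2}$ your sketch would suggest, so the discrepancy in constants is itself a symptom of the missing step. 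A second, more contextual caveat: in this paper $\pi_k$ is randomized, so ``action $u$ is taken when state is $x$'' does not imply that every visit to $x$ is a visit to $(x,u)$ unless $\cK_{x,u}$ is read, as in \citep{auer_07}, as the set of episodes whose policy deterministically maps $x$ to $u$; your proof is valid under that (evidently intended) reading, but it does not extend verbatim to episodes with $\pi_k(x,u)<1$.
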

\begin{lemma}(Lemma 2, \cite{auer_07})\label{lemma:mix}
	Let $\pi$ be a stationary policy. Consider a controlled Markov process that starts in state $x$ and evolves under $\pi$. We then have that 
	\begin{align*}
		\bE_{x}\left( \sum_{t=1}^{T} r(x(t),u(t)) \right)  \ge TJ(\pi,p) - T_p.
	\end{align*}
\end{lemma}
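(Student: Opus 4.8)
The plan is to prove the statement through the differential (bias) value function associated with $\pi$, a telescoping argument, and a hitting-time bound on the span of that function. Since the MDP is unichain under the stationary policy $\pi$, the induced Markov chain (with kernel $P_\pi(y,z) := \bE_{u\sim\pi(y)}[p(y,z,u)]$ and one-step mean reward $\bar r_\pi(y) := \bE_{u\sim\pi(y)}[r(y,u)]$) has a unique stationary distribution and admits a bounded bias function $h(\cdot)$ solving the Poisson equation $\bar r_\pi(y) + (P_\pi h)(y) = J(\pi,p) + h(y)$ for all $y \in X$, where $(P_\pi h)(y) := \sum_z P_\pi(y,z) h(z)$. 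Rearranged, this reads $\bar r_\pi(y) = J(\pi,p) + h(y) - (P_\pi h)(y)$.

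First I would take conditional expectations along the trajectory started at $x(1)=x$. Because $\bE_x[r(x(t),u(t))] = \bE_x[\bar r_\pi(x(t))]$ and $\bE_x[(P_\pi h)(x(t))] = \bE_x[h(x(t+1))]$ by the Markov property, the per-step identity becomes $\bE_x[r(x(t),u(t))] = J(\pi,p) + \bE_x[h(x(t))] - \bE_x[h(x(t+1))]$. Summing over $t=1,\dots,T$ telescopes to the exact formula
\[
\bE_x\Big(\sum_{t=1}^{T} r(x(t),u(t))\Big) = T\, J(\pi,p) + h(x) - \bE_x[h(x(T+1))].
\]
Thus the deficit from $T\,J(\pi,p)$ is exactly $h(x) - \bE_x[h(x(T+1))]$, which is bounded below by $-\mathrm{sp}(h)$, where $\mathrm{sp}(h) := \max_z h(z) - \min_w h(w)$ denotes the span of the bias.

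It then remains to show $\mathrm{sp}(h) \le T_p$, which I expect to be the crux. For this I would invoke the standard probabilistic representation of the bias as a hitting-time sum: fixing any reference state $y$, one has $h(x) - h(y) = \bE_x\big[\sum_{t=0}^{\tau^\pi_{x,y}-1}(\bar r_\pi(x(t)) - J(\pi,p))\big]$, where $\tau^\pi_{x,y}$ is the hitting time of $y$ from $x$. Since $r(x,u) \in [0,1]$ and $J(\pi,p) \in [0,1]$ force $|\bar r_\pi - J(\pi,p)| \le 1$, each summand is at most $1$ in absolute value, so $|h(x)-h(y)| \le \bE_x[\tau^\pi_{x,y}] \le T_p$ by Definition~\ref{def:cond}. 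Taking $x$ and $y$ to be a maximizer and a minimizer of $h$ yields $\mathrm{sp}(h) \le T_p$. Combining this with the telescoping identity gives $\bE_x\big(\sum_{t=1}^T r(x(t),u(t))\big) \ge T\,J(\pi,p) - T_p$, as claimed.

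The main obstacle is the span bound $\mathrm{sp}(h) \le T_p$: it is the only place where the mixing-time quantity enters, and establishing it requires passing from the analytic Poisson-equation characterization of $h$ to its probabilistic hitting-time form, then controlling that form uniformly using $\max_{x,y}\bE_x[\tau^\pi_{x,y}] \le T_p$. The remaining ingredients — existence of $h$ under the unichain assumption and the telescoping computation — are routine.
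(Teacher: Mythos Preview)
Your proof is correct and follows the standard Poisson-equation route. Note, however, that the paper does not supply its own proof of this lemma: it is stated as an auxiliary result with only the citation ``(Lemma 2, \cite{auer_07})'' and no argument. So there is nothing in the paper to compare your proposal against beyond the bare statement.

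As for the argument itself, each step is sound: the unichain assumption guarantees a bounded solution $h$ to the Poisson equation; the telescoping identity $\bE_x\!\big[\sum_{t=1}^T r(x(t),u(t))\big] = T\,J(\pi,p) + h(x) - \bE_x[h(x(T{+}1))]$ is exact; and the hitting-time representation $h(x)-h(y) = \bE_x\big[\sum_{t<\tau^\pi_{x,y}}(\bar r_\pi(x(t))-J(\pi,p))\big]$, combined with $|\bar r_\pi - J(\pi,p)|\le 1$ and Definition~\ref{def:cond}, gives $\mathrm{sp}(h)\le \max_{x,y}\bE[\tau^\pi_{x,y}]\le T_p$. This is the classical derivation underlying the cited result, so your approach matches what one would expect from the reference.
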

\begin{lemma}\label{lemma:bound_on_c}
	Consider the following function $f(x)$ such that $a_0>a_1>0$,
	\begin{align}
		f(x)=x-2\sqrt{a_1x}-2a_0.
	\end{align}
Then there exist $x_0<11a_0$ such that $f(x)>0$ for all $x>x_0$.
\end{lemma}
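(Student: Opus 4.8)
The plan is to reduce the claim to an elementary root-finding problem for a quadratic, exploiting that $f$ is affine in $\sqrt{x}$. First I would substitute $t = \sqrt{x}$ (the statement is only meaningful for $x > 0$), which turns $f$ into the upward-opening parabola $g(t) := t^2 - 2\sqrt{a_1}\,t - 2a_0$. By the quadratic formula its roots are $\sqrt{a_1} \pm \sqrt{a_1 + 2a_0}$, so it has exactly one positive root $t^{\star} = \sqrt{a_1} + \sqrt{a_1 + 2a_0}$. Since $g$ opens upward, $g(t) > 0$ precisely when $t > t^{\star}$; translating back through $t = \sqrt{x}$ gives $f(x) > 0$ for all $x > x_0$, where $x_0 := (t^{\star})^2 = 2a_1 + 2a_0 + 2\sqrt{a_1(a_1 + 2a_0)}$.

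It then remains to verify the numerical bound $x_0 < 11 a_0$, and this is where the hypothesis $a_0 > a_1 > 0$ enters. I would bound the three summands of $x_0$ separately: $2a_1 < 2a_0$ directly, and $a_1(a_1 + 2a_0) < a_0 \cdot 3 a_0 = 3 a_0^2$ (using $a_1 < a_0$ in each factor), so that $2\sqrt{a_1(a_1 + 2a_0)} < 2\sqrt{3}\,a_0$. Adding these yields $x_0 < (4 + 2\sqrt{3})\,a_0 \approx 7.46\,a_0$, which is comfortably below $11 a_0$, so taking this $x_0$ (or indeed any value in $[(t^{\star})^2,\,11 a_0)$) completes the argument.

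There is no genuine obstacle here; the computation is routine once the substitution $t = \sqrt{x}$ is made. The only point worth a remark is that the constant $11$ is deliberately loose---the sharp threshold is $4 + 2\sqrt{3}$---so that a clean integer can be propagated through the regret analysis, matching for instance the constant $c_1 < 11/\kappa_p^2$ invoked in the proof of Theorem~\ref{th:regret}.
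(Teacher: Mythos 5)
Your proof is correct, but it takes a different route from the paper. You solve the problem exactly: substituting $t=\sqrt{x}$ turns $f$ into an upward-opening quadratic whose unique positive root gives the precise sign-change point $x_0 = 2a_1 + 2a_0 + 2\sqrt{a_1(a_1+2a_0)}$, which you then bound by $(4+2\sqrt{3})a_0 < 11a_0$ using $a_1 < a_0$. The paper never computes the root: it observes $f(a_1) = -a_1 - 2a_0 < 0$, that $\frac{\partial f}{\partial x} = 1 - \sqrt{a_1/x} > 0$ for $x > a_1$, and then simply evaluates $f(11a_0) = 9a_0 - 2\sqrt{11 a_0 a_1} > (9 - 2\sqrt{11})a_0 > 0$; monotonicity then places the root strictly below $11a_0$. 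The paper's argument is the more minimal one --- a single derivative check and a single evaluation at the nominal cutoff, which is all the regret analysis needs --- while yours buys more: the exact threshold, the observation that $g(t)>0$ precisely for $t>t^{\star}$ (so $f>0$ exactly on $(x_0,\infty)$, not merely eventually), and the identification of $4+2\sqrt{3}\approx 7.46$ as the sharp uniform constant (attained in the limit $a_1 \uparrow a_0$), confirming that the paper's $11$ is deliberately slack. Your closing remark about propagating a clean integer constant, matching $c_1 < 11/\kappa_p^2$ in the regret theorem, correctly identifies why the looser constant is retained.
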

\begin{proof}Note that $f(a_1)<0$ and 
	\begin{align*}
\frac{\partial f}{\partial x}=1-\sqrt{\frac{a_1}{x}}>0 ~\forall~ x>a_1.
\end{align*}
The result follows since $f(11a_0)=9a_0-2\sqrt{11a_0a_1}>(9-2\sqrt{11})a_0>0.$
\end{proof}
\section{Perturbation Analysis of CMDPs}
We derive some results on the variations in the value of optimal reward of the CMDP 
\begin{align}
	\max_{\pi} &\liminf_{T\to\infty }\frac{1}{T} \mathbb{E}_{\pi}\sum_{t=1}^{T} r( x(t),u(t)) \label{eq:cmdp_modf_obj}\\
	\mbox{ s.t. }&  \limsup_{T\to\infty }\frac{1}{T} \mathbb{E}_{\pi}\sum_{t=1}^{T} c(x(t),u(t)) \leq c_{ub}.\label{eq:cmdp_modf_constr}
\end{align}
as a function of the cost budget $c_{ub}$. Consider a cost budget $\hat{c}_{ub}$ that satisfies
\begin{align}\label{ineq:c_range}
	c_{ub}-\epsilon \le \hat{c}_{ub} \le c_{ub}
\end{align}
where $\epsilon>0$. 
\begin{lemma}\label{lemma:lagrange_ub} 
	Let the MDP $p$ be unichain and strictly feasible. Let $\lambda\ust$ be an optimal dual variable\slash Lagrange multiplier associated with the CMDP~\eqref{eq:cmdp_modf_obj}-\eqref{eq:cmdp_modf_constr}. Then, $\lambda\ust$ satisfies 
	\begin{align*}
		\lambda\ust \le \frac{\hat{\eta} }{\eta},
	\end{align*}
	where the constant $\eta$ is as in~\eqref{def:eta}, while $\hat{\eta}$ is defined as follows,
	\begin{align}\label{eq:def_eta}
		\hat{\eta}:= \max_{(x,u)\in\mathcal{S}\times\mathcal{A}}r(s,a) - \min_{(s,a)\in\mathcal{S}\times\mathcal{A}}r(s,a).  
	\end{align}
\end{lemma}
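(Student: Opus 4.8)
The plan is to use Lagrangian (equivalently, linear-programming) duality for the average-reward CMDP and to extract the bound on $\lambda\ust$ by testing the dual function against the strictly feasible policy $\pi_{feas.}$ guaranteed by Assumption~\ref{assum:strict}. Concretely, I would introduce the Lagrangian $L(\pi,\lambda):=\bar{r}(\pi)+\lambda\bigl(\hat{c}_{ub}-\bar{c}(\pi)\bigr)$ and its dual function $g(\lambda):=\max_{\pi}L(\pi,\lambda)$. Because the MDP is unichain, the CMDP admits the occupation-measure LP formulation (as in~\eqref{overallobj_appx}--\eqref{constall_appx}), and since it is strictly feasible, strong duality holds: there is an optimal multiplier $\lambda\ust\ge 0$ with $R\ust=g(\lambda\ust)$, where $R\ust$ is the optimal reward of the CMDP with budget $\hat{c}_{ub}$.

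The quantitative step is then short. Since $g(\lambda\ust)$ is a maximum over all stationary policies, evaluating the Lagrangian at $\pi_{feas.}$ gives
\[
R\ust=g(\lambda\ust)\ge \bar{r}(\pi_{feas.})+\lambda\ust\bigl(\hat{c}_{ub}-\bar{c}(\pi_{feas.})\bigr).
\]
Using $\hat{c}_{ub}\ge c_{ub}-\epsilon$ from~\eqref{ineq:c_range} together with the definition~\eqref{def:eta} of $\eta$ yields $\hat{c}_{ub}-\bar{c}(\pi_{feas.})\ge \eta>0$, so after rearranging,
\[
\lambda\ust\le \frac{R\ust-\bar{r}(\pi_{feas.})}{\eta}.
\]
Finally, since the long-run average reward of any policy is sandwiched between the smallest and the largest one-stage rewards, $R\ust\le \max_{(x,u)}r(x,u)$ and $\bar{r}(\pi_{feas.})\ge \min_{(x,u)}r(x,u)$; subtracting gives $R\ust-\bar{r}(\pi_{feas.})\le \hat{\eta}$ by the definition~\eqref{eq:def_eta} of $\hat{\eta}$, which delivers $\lambda\ust\le \hat{\eta}/\eta$.

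The main obstacle is the strong-duality step rather than the arithmetic: one must justify that the average-reward CMDP has zero duality gap and that a finite optimal multiplier exists. This is exactly where the unichain hypothesis (to obtain a well-posed occupation-measure LP with a bounded average reward independent of the initial state) and strict feasibility (Slater's condition, which guarantees the positive slack $\eta$) enter; I would cite the standard constrained-MDP duality theory rather than reprove it. One further point to handle with care is the distinction between $c_{ub}$ and $\hat{c}_{ub}$: the argument must be run for an arbitrary $\hat{c}_{ub}\in[c_{ub}-\epsilon,c_{ub}]$, and the key inequality $\hat{c}_{ub}-\bar{c}(\pi_{feas.})\ge\eta$ is precisely what makes the resulting bound on $\lambda\ust$ uniform over that whole range.
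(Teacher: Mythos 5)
Your proposal is correct and takes essentially the same route as the paper: both arguments test the Lagrangian at the strictly feasible policy $\pi_{feas.}$ (your step $g(\lambda\ust)\ge \bar{r}(\pi_{feas.})+\lambda\ust(\hat{c}_{ub}-\bar{c}(\pi_{feas.}))$ is exactly the paper's middle inequality, with your direct appeal to strong duality replacing the paper's equivalent combination of complementary slackness and Lagrangian optimality of $\pi\ust(\hat{c}_{ub})$), then bound the reward difference by $\hat{\eta}$ using $r(x,u)\in[\min r,\max r]$ and the constraint slack from below by $\eta$. Your explicit check that $\hat{c}_{ub}-\bar{c}(\pi_{feas.})\ge\eta$ uniformly over $\hat{c}_{ub}\in[c_{ub}-\epsilon,c_{ub}]$ is the same point the paper uses implicitly via Assumption~\ref{assum:strict}.
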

\proof Within this proof, we let $\pi^{\star}(\hat{c}^{ub})$ denote an optimal stationary policy for~\eqref{eq:cmdp_modf_obj}-\eqref{eq:cmdp_modf_constr}. Let $\pi_{feas.}$ be the feasible policy was that satisfies $\bar{c}(\pi_{feas.})\le c^{ub}-\eta$.
We have
\begin{align*}
	\max_{(s,a)\in\mathcal{S}\times\mathcal{A}}r(s,a) &\geq  \bar{r}(\pi^{\star}(\hat{c}^{ub}))\\
	&= \bar{r}(\pi^{\star}(\hat{c}^{ub})) +  \lambda\ust \left(\hat{c}^{ub}-\bar{c}( \pi^{\star}(\hat{c}^{ub})  \right) \\
	&\geq\bar{r}(\pi_{feas.}) + \lambda\ust \left(\hat{c}^{ub}-\bar{c}(\pi_{feas.}) \right) \\
	&\geq \min_{(s,a)\in \mathcal{S}\times\mathcal{A} }r(s,a) +  \lambda\ust \left(\hat{c}^{ub}-\bar{c}(\pi_{feas.}) \right) \\
	&\geq \min_{(s,a)\in \mathcal{S}\times\mathcal{A} }r(s,a) + \eta \sum_{i=1}^{M} \lambda\ust_i,
\end{align*}
where the second inequality follows since a policy that is optimal for the problem~\eqref{eq:cmdp_modf_obj}-\eqref{eq:cmdp_modf_constr} maximizes the Lagrangian $\bar{r}(\pi) +  \lambda \left(\hat{c}^{ub}-\bar{c}( \pi)  \right)$ when the Lagrange multiplier $\lambda$ is set equal to  $\lambda\ust$~\citep{bertsekas1997nonlinear}. Rearranging the above inequality yields the desired result. $\square$

\begin{theorem}\label{th:perturbation_cmdp}
	Let the MDP $p$ satisfy Assumption~\ref{assum:1} and Assumption~\ref{assum:strict}. If $r^{\star}(\hat{c}^{ub})$ denotes optimal reward value of the CMDP
	\begin{align}
		\max_{\pi} &\liminf_{T\to\infty }\frac{1}{T} \mathbb{E}_{\pi}\sum_{t=1}^{T} r( x(t),u(t)) \label{eq:cmdp_modf_obj_hat}\\
		\mbox{ s.t. }&  \limsup_{T\to\infty }\frac{1}{T} \mathbb{E}_{\pi}\sum_{t=1}^{T} c(x(t),u(t)) \leq c_{ub},\label{eq:cmdp_modf_constr_hat}
	\end{align}
 and $r\ust$ is optimal reward of problem~\eqref{eq:cmdp_modf_obj}-\eqref{eq:cmdp_modf_constr}, then we have that
	\begin{align*}
		r^{\star} - r^{\star} (\hat{c}_{ub}) \leq \left( c_{ub} - \hat{c}_{ub}  \right) \frac{\hat{\eta}}{\eta},
	\end{align*}
	where $\hat{\eta}$ is as in~\eqref{eq:def_eta}, $\eta$ is as in~\eqref{def:eta}, and $\hat{c}$ satisfies~\eqref{ineq:c_range}.
\end{theorem}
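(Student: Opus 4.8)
The plan is to exploit the Lagrangian saddle-point structure of the average-cost CMDP together with the multiplier bound already established in Lemma~\ref{lemma:lagrange_ub}. Write $\pi\ust$ for an optimal stationary policy of the unperturbed problem~\eqref{eq:cmdp_modf_obj}-\eqref{eq:cmdp_modf_constr} (budget $c_{ub}$), so that $\bar{r}(\pi\ust)=r\ust$ and $\bar{c}(\pi\ust)\le c_{ub}$. Let $\hat{\pi}$ be optimal for the perturbed CMDP with the tighter budget $\hat{c}_{ub}$, and let $\lambda\ust$ be an associated optimal Lagrange multiplier for that perturbed problem. Since the MDP is unichain and strictly feasible, strong duality holds for the average-cost CMDP and the optimal primal value equals the value of the Lagrangian at the saddle point,
\nal{
r\ust(\hat{c}_{ub}) = \max_{\pi}\Big\{ \bar{r}(\pi) + \lambda\ust\big(\hat{c}_{ub} - \bar{c}(\pi)\big)\Big\}.
}

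First I would check that Lemma~\ref{lemma:lagrange_ub} applies to the \emph{perturbed} budget, giving $\lambda\ust \le \hat{\eta}/\eta$. The point is that the strict-feasibility witness $\pi_{feas.}$ of Assumption~\ref{assum:strict} satisfies $\bar{c}(\pi_{feas.}) = c_{ub} - \epsilon - \eta$; because $\hat{c}_{ub} \ge c_{ub} - \epsilon$ by~\eqref{ineq:c_range}, the slack of $\pi_{feas.}$ in the perturbed problem is still at least $\eta$, so the Slater constant does not degrade and the multiplier bound carries over verbatim.

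Next I would lower-bound $r\ust(\hat{c}_{ub})$ by evaluating the Lagrangian at the suboptimal but admissible choice $\pi=\pi\ust$:
\nal{
r\ust(\hat{c}_{ub}) \ge \bar{r}(\pi\ust) + \lambda\ust\big(\hat{c}_{ub} - \bar{c}(\pi\ust)\big) = r\ust + \lambda\ust\big(\hat{c}_{ub} - \bar{c}(\pi\ust)\big).
}
Using $\bar{c}(\pi\ust)\le c_{ub}$ and $\lambda\ust\ge 0$ gives $\lambda\ust\big(\hat{c}_{ub}-\bar{c}(\pi\ust)\big) \ge \lambda\ust(\hat{c}_{ub}-c_{ub}) = -\lambda\ust(c_{ub}-\hat{c}_{ub})$, whence
\nal{
r\ust - r\ust(\hat{c}_{ub}) \le \lambda\ust\,(c_{ub}-\hat{c}_{ub}) \le \frac{\hat{\eta}}{\eta}\,(c_{ub}-\hat{c}_{ub}),
}
the last step being the multiplier bound from Lemma~\ref{lemma:lagrange_ub}. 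This is exactly the claimed perturbation estimate.

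I expect the only genuinely delicate point to be the justification of strong duality and the existence of a finite optimal multiplier for the average-cost CMDP; this is where unichain-ness and strict feasibility (Slater's condition) are essential, and it is precisely the machinery already invoked in the proof of Lemma~\ref{lemma:lagrange_ub}. Equivalently, one could phrase the whole argument through concavity of the optimal-value map $b\mapsto r\ust(b)$ and identify $\lambda\ust$ as a supergradient at $b=\hat{c}_{ub}$, with the supporting-hyperplane inequality reproducing the displays above; everything beyond that is a one-line weak-duality evaluation.
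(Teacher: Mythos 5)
Your proof is correct and follows essentially the same route as the paper: strong duality under unichain-ness and Slater's condition, evaluation of the perturbed problem's Lagrangian (with its optimal multiplier $\lambda\ust$) at the unperturbed optimal policy, and the multiplier bound $\lambda\ust \le \hat{\eta}/\eta$ from Lemma~\ref{lemma:lagrange_ub}. Your explicit check that the strict-feasibility slack of $\pi_{feas.}$ does not degrade at the tightened budget $\hat{c}_{ub}$ is a point the paper leaves implicit, and is a welcome addition rather than a deviation.
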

\proof 
Since under our assumption, both the CMDPs are feasible,
we can use the strong duality property of LPs in order to conclude that the optimal value of the primal and the dual problems for both the CMDPs are equal. Thus,
\begin{align}
	r^{\star} &= \sup_{\pi} \inf_{\lambda} ~~\bar{r}(\pi)+\sum_{i=1}^{M} \lambda_i\left(c^{ub}_i - \bar{c}_i(\pi)\right),\label{ineq:12}\\
	r^{\star} (\hat{c}^{ub}) &= \sup_{\pi} \inf_{\lambda}~~ \bar{r}(\pi)+\sum_{i=1}^{M} \lambda_i \left(\hat{c}_i^{ub} - \bar{c}_i(\pi)\right).\label{ineq:13}
\end{align}
Let $\pi^{(1)},\pi^{(2)}$ and $\lambda^{(1)},\lambda^{(2)}$ denote optimal policies and optimal dual variables for the two CMDPs. It then follows from~\eqref{ineq:12} and~\eqref{ineq:13} that, 
\begin{align*}
	r^{\star} &\leq  \bar{r}(\pi^{(1)})+\lambda^{(2)} \left( c^{ub}- \bar{c}(\pi^{(1)}) \right),\\
	\mbox{ and }~~r^{\star} ( \hat{c}^{ub}) &\geq \bar{r}(\pi^{(1)})+  \lambda^{(2)} \left(\hat{c}^{ub}- \bar{c}(\pi^{(1)}) \right).
\end{align*}
Subtracting the second inequality from the first yields
\begin{align*}
	r^{\star} - r^{\star} (c^{ub}) &\leq \lambda^{(2)}\left( c^{ub} - \hat{c}^{ub} \right)\\
	&\leq \left(\left\{c^{ub} - \hat{c}^{ub}\right\} \right) \left( \lambda^{(2)}\right)\\
	&\leq  \left( \left\{c^{ub} - \hat{c}^{ub} \right\} \right) \frac{\hat{\eta}}{\eta},
\end{align*}
where the last inequality follows from Lemma~\ref{lemma:lagrange_ub}. This completes the proof. $\square$

\bibliographystyle{plain}
\bibliography{ref_file}

\end{document}